\newtheorem{theorem}{Theorem}[section]
\newtheorem{lemma}[theorem]{Lemma}
\newtheorem{proposition}[theorem]{Proposition}
\newtheorem{corollary}[theorem]{Corollary}
\newtheorem{remark}[theorem]{Remark}
\newtheorem{example}[theorem]{Example}
\newtheorem{examples}[theorem]{Examples}
\def\zcl{\protect\operatorname{zcl}}
\def\TC{\protect\operatorname{TC}}
\def\cat{\protect\operatorname{cat}}
\def\Imm{\mathrm{Imm}}
\title[Motion planning in real flag manifolds]{Motion planning in real flag manifolds\textsuperscript{\,$\natural$}}
\author{Jes\'us Gonz\'alez\textsuperscript{\dag}}
\author{B\'arbara Guti\'errez}
\author{Darwin Guti\'errez}
\author{Adriana Lara\textsuperscript{\ddag}}
\thanks{\textsuperscript{$\natural$}~~This paper is part of the Ph.D.~thesis work of the third author}% at the Escuela Superior de F\'isica y Matem\'aticas, IPN, M\'exico.}
\thanks{\textsuperscript{\dag}~~Supported by Conacyt Research Grant 221221.}
\thanks{\textsuperscript{\ddag}~~Supported by Grant SIP20152082.}
\address{Departamento de Matem\'aticas, \newline \indent Centro de Investigaci\'on y de Estudios Avanzados del IPN, \newline \indent Av.~IPN 2508, Zacatenco, Mexico City 07000, Mexico.}
\email{jesus@math.cinvestav.mx}
\email{bgutierrez@math.cinvestav.mx}
\address{Departamento de Formaci\'on B\'asica, \newline \indent Escuela Superior de C\'omputo del IPN, \newline \indent Juan de Dios B\'atiz esq.~Miguel Oth\'on de Mendiz\'abal, M\'exico City 07738, Mexico.}
\email{dargut@hotmail.com}
\address{Departamento de Matem\'aticas, \newline \indent Escuela Superior de F\'isica y Matem\'aticas del IPN, \newline \indent Edificio 9, U.P.~Adolfo L\'opez Mateos, Mexico City 07300, Mexico.}
\email{adriana@esfm.ipn.mx}
\begin{document}
\begin{abstract}
Starting from Borel's description of the mod-2 cohomology of real flag manifolds, we give a minimal presentation of the cohomology ring for semi complete flag manifolds $F_{k,m}:=F(1,\ldots,1,m)$ where $1$ is repeated $k$ times. The information is used in order to estimate Farber's topological complexity of these spaces when $m$ approaches (from below) a 2-power. In particular, we get almost sharp estimates for $F_{2,2^e-1}$ which resemble the known situation for the real projective spaces $F_{1,2^e}$. Our results indicate that the agreement between the topological complexity and the immersion dimension of real projective spaces no longer holds for other flag manifolds. More interestingly, we also get corresponding results for the $s$-th (higher) topological complexity of these spaces. Actually, we prove the surprising fact that, as $s$ increases, the estimates become stronger. Indeed, we get several full computations of the higher motion planning problem of these manifolds. This property is also shown to hold for surfaces: we get a complete computation of the higher topological complexity of all closed surfaces (orientable or not). A homotopy-obstruction-theory explanation is included for the phenomenon of having a cohomologically accessible higher topological complexity even when the regular topological complexity is not so accessible.
\end{abstract}
\maketitle\tableofcontents

\section{Introduction and main results}
The concept of topological complexity (TC) of a space $X$ was introduced early this millennium by Michael Farber as a way to utilize techniques from homotopy theory in order to model and study, from a topological perspective, the motion planning problem in robotics. If $P(X)$ stands for the space of free paths in $X$, then $\TC(X)$ is the reduced Schwarz genus (also known as sectional category) of the fibration $e\colon P(X)\to X\times X$ given by $e(\gamma)=(\gamma(0),\gamma(1))$. We refer the reader to the book~\cite{MR2455573} and the references therein for a discussion of the meaning, relevance, and basic properties of Farber's concept. 

\medskip The idea was generalized a few years latter by Yuli Rudyak, who defined in~\cite{MR2593704} the $s$-th topological complexity of $X$, $\TC_s(X)$, as the reduced Schwarz genus of the $s$-th fold evaluation map $e_s\colon P(X)\to X^s$ given by $$e_s(\gamma)=\left(\gamma(0),\gamma\left(\frac{1}{s-1}\right),\gamma\left(\frac{2}{s-1}\right),\ldots,\gamma\left(\frac{s-2}{s-1}\right),\gamma(1)\right).$$
In particular $\TC=\TC_2$. Rudyak's ``higher'' topological complexity has been studied systematically in~\cite{MR3331610}.

\medskip
The purpose of this paper is two fold. For one, we give extensive computations to estimate the value of  $\TC_s$ on a number of infinite families of semicomplete real flag manifolds $F(1^k,m)$---the Grassmann type manifolds consisting of $k+1$ tuples $(L_1,\ldots L_k,V)$ of mutually orthogonal linear subspaces of $\mathbb{R}^{m+k}$ with $\dim(V)=m$ and $\dim(L_i)=1$ for $1\leq i\leq k$. Much of the motivation here comes from an amazing and unexpected connection between Farber's TC and one of the central problems in differential topology, namely the Euclidean immersion dimension for  smooth manifolds. Explicitly, for a manifold $M$, let $\mathrm{Imm}(M)$ denote the dimension of the smallest Euclidean space where $M$ can be immersed. Then the main result in~\cite{MR1988783} asserts that $\TC=\mathrm{Imm}$ for all real projective spaces $\mathbb{R}\mathrm{P}^m$ except for the only three parallelizable manifolds, $\mathbb{R}\mathrm{P}^1$, $\mathbb{R}\mathrm{P}^3$, and $\mathbb{R}\mathrm{P}^7$, for which the relation $\TC=\Imm-1$ holds. Of course, flag manifolds $F(1^k,m)$ are a natural generalization of real projective spaces. So  it is natural to ask whether the above relationship between topological complexty and immersion dimension also holds for the larger family of manifolds. Although the Euclidean immersion dimension of real flag manifolds is a much studied problem, and quite a lot of numeric information on it is available to date, the topological complexity of real flag manifolds had not been considered before---except, of course, for the already noted results with real projective spaces. The numeric $\TC$-results in this paper now show that the nice relationship between TC and Imm holding for real projective spaces $F(1,m)$ does not hold for flag manifolds $F(1^k,m)$ with $k>1$. For instance, $F(1,1,1)$ is a closed parallelizable 3-manifold~(\cite{MR809636}), so that $\mathrm{Imm}(F(1,1,1))=4$. However, Theorem~\ref{tcf13mintro} below gives $\TC(F(1,1,1))\in\{5,6\}$. Thus the relation `$\TC=\mathrm{Imm}-1$' holding for parallelizable real projective spaces no longer holds in the case of the other parallelizable flag manifolds $F(1^k,1)$. In general, flag manifolds (whether parallelizable of not) seem to have a larger TC than an Imm. For instance, the 7-dimensional flag manifold $F(1,1,3)$ has $\mathrm{Imm(F(1,1,3))}=10$~(\cite{MR0431194,MR702304}) whereas, according to Theorem~\ref{tcf13mintro} below, $\TC(F(1,1,3))\in\{13,14\}$.

\medskip
The second purpose of this paper aims at exhibiting subtle but substantial differences between Farber's original concept and Rudyak's extended definition. The point is that, after an initial examination, Rudyak's higher TC could seem to be a close relative of Farber's TC. For instance, it has been shown that the families of spaces $X$ whose TC has been computed have an equally computable higher TC (cf.~\cite{MR3331610,MR3373948,GGY}). Likewise, some theoretical results for TC have reasonable (although sometimes more complicated to prove) higher TC generalizations, see for instance~\cite{carrasquel,MR3396996,MR3117387,MR3020869}. However, other interesting theoretical properties known for TC do not have a known higher TC counterpart. For instance, it is known that the standard upper bound $2\dim(X)$ for $\TC(X)$ can be lowered by one unit whenever $\pi_1(X)=\mathbb{Z}_2$~(\cite{MR2649230}). Before this paper, it was not even clear whether the proof of such a fact could be generalized to the higher TC realm. As a consequence of Theorem~\ref{familias} below (with $k=1$), we now have that such a potential $\TC_s$ generalization is doomed to fail for $s\geq3$.

\medskip
Closely related to the above fact is the phenomenon that there are infinite families of spaces for which the computation of their TC would require a non-elementary homotopy theoretic argument, but whose higher TC can be computed using purely cohomological (i.e.~much simpler) methods. Indeed, as shown in this paper (Theorems~\ref{familias} and~\ref{teosur}), surfaces and flag manifolds $F(1^k,2^e-k+1)$ 
%\red{with\footnote{\red{It is possible that the restriction on $k$ is superfluous.}} $k\leq4$ or $k=2^e$} 
with $k\leq3$ have such a property\footnote{It might be the case that the restriction on $k$ can be removed---see the second half of Remark~\ref{dosnotasaclaras}.}. Technically speaking, this phenomenon can be summarized by saying that, in many cases, {\em the $\TC_2$-obstruction described in~\cite[Theorem~7]{MR2649230} vanishes without the vanishing of the analogous $\TC_s$-obstruction for $s\geq3$} (see the comments following~(\ref{comparacion1}) below).

\medskip
We next state our main results and explain how they fit within the introductory considerations above. Further comments will be given throughout the paper.

\begin{theorem}[Corollary~\ref{tcf13m}]
\label{tcf13mintro}
Let $k$ and $m$ be positive integers, $\delta\in\{0,1,\ldots,k-1\}$, and set $\epsilon=\min(\delta,1)$ and $\alpha(r)=\max(0,r)$. If $e$ is a nonnegative integer satisfying $2\delta\leq2^e\leq m+\delta$, then
\begin{equation}\label{estimate357}
(k-\delta+\epsilon)(2^{e+1}-1)+\hspace{.2mm}\alpha\hspace{-.5mm}\left({(\delta-1)(2^e-1)}\right)-\epsilon\leq\TC(F(1^k,m))\leq k(2m+k-1).
\end{equation}
\end{theorem}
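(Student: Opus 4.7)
The upper bound is Farber's standard inequality $\TC(X)\le 2\dim(X)$. Since $F(1^k,m)=O(m+k)/(O(1)^k\times O(m))$, one computes
\[\dim F(1^k,m)=\binom{m+k}{2}-\binom{m}{2}=\frac{k(2m+k-1)}{2},\]
which gives exactly the right-hand side of~(\ref{estimate357}).

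For the lower bound, my plan is to use Farber's zero-divisors cup length bound $\TC(X)\ge \zcl(X;\mathbb{F}_2)$. The minimal presentation of $H^*(F(1^k,m);\mathbb{F}_2)$ announced in the abstract takes the form $\mathbb{F}_2[x_1,\ldots,x_k]/J$ with $\deg(x_i)=1$, the $x_i$ being the first Stiefel--Whitney classes of the tautological line bundles $L_i$. Working with the zero-divisors $\bar{x}_i:=x_i\otimes 1+1\otimes x_i\in H^1(F(1^k,m)^{\times 2};\mathbb{F}_2)$, I would exhibit an explicit nonzero product
\[\prod_{i=1}^{k}\bar{x}_i^{\,a_i}\]
whose total degree matches the left-hand side of~(\ref{estimate357}). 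Decoding the lower bound, the $a_i$'s should be chosen as $2^{e+1}-1$ for $k-\delta$ indices, $2^{e+1}-2$ for one distinguished index (this accounts for the $-\epsilon$ correction when $\delta\ge 1$), and $2^e-1$ for the remaining $\delta-1$ indices (contributing the summand $\alpha((\delta-1)(2^e-1))$). In the degenerate case $\delta=0$ the latter two blocks are empty and all $k$ exponents equal $2^{e+1}-1$.

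The principal technical hurdle is verifying non-vanishing of this product. In characteristic $2$, $\binom{2^t-1}{j}\equiv 1\pmod 2$ for $0\le j\le 2^t-1$, so each factor $\bar{x}_i^{\,2^t-1}$ expands as $\sum_{j=0}^{2^t-1} x_i^{\,j}\otimes x_i^{\,2^t-1-j}$; the Frobenius-type identity $\bar{x}_i^{\,2^{e+1}-2}=(\bar{x}_i^{\,2})^{2^e-1}$ together with $\bar{x}_i^{\,2}=x_i^{\,2}\otimes 1+1\otimes x_i^{\,2}$ handles the distinguished factor. Multiplying over $i$ yields a sum, indexed by multi-indices $(b_1,\ldots,b_k)$, of pure tensors in pairwise distinct bidegrees, so non-vanishing reduces to exhibiting a single multi-index for which both tensor factors avoid the ideal $J$. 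The hypotheses $2\delta\le 2^e\le m+\delta$ should be exactly what is required: the upper constraint $2^e\le m+\delta$ ensures that the relevant monomials in the $x_i$'s remain nonzero modulo $J$ (via the Borel/Stiefel--Whitney-type admissibility controlled by the binary expansion of exponents), while $2\delta\le 2^e$ leaves enough room to distribute exponents across both tensor factors simultaneously. The hard step is therefore the careful combinatorial verification, inside the quotient $\mathbb{F}_2[x_1,\ldots,x_k]/J$ given by the minimal presentation, that such a surviving multi-index genuinely exists---and this is where the explicit form of $J$ worked out earlier in the paper becomes indispensable.
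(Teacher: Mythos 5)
You have the right framework and, impressively, you have decoded the exponent pattern exactly: $2^{e+1}-1$ for the first $k-\delta$ zero-divisors, $2^{e+1}-2$ for one, and $2^e-1$ for the remaining $\delta-1$, applied to $F(1^k,2^e-\delta)$ and then pushed into $F(1^k,m)$ via the cohomology restriction induced by $F(1^k,2^e-\delta)\hookrightarrow F(1^k,m)$. The upper bound via $\TC(X)\le 2\dim(X)$ and the dimension count are also fine. But there is a genuine gap in the lower-bound argument, and one of your intermediate claims is actually false.

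You write that expanding $\prod_i \bar{x}_i^{a_i}$ ``yields a sum, indexed by multi-indices $(b_1,\ldots,b_k)$, of pure tensors in pairwise distinct bidegrees, so non-vanishing reduces to exhibiting a single multi-index for which both tensor factors avoid the ideal $J$.'' That is not so. Different multi-indices with the same $\sum_j b_j$ produce terms in the \emph{same} bidegree of $H^*(X)\otimes H^*(X)$, so cancellation across multi-indices is very much possible; moreover each monomial $\prod_j x_j^{b_j}$ must first be re-expressed in the additive basis $x(n_1,\dots,n_k)$ (with $n_j\le m+k-j$) using the relations of $J$, which introduces further cross-terms and cancellations. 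Controlling these cancellations is exactly the hard part, so your ``reduces to exhibiting a surviving multi-index'' step is where the argument breaks down, not merely where it is left to the reader. The paper itself notes (Remarks~\ref{tesisrem1} and~\ref{tesisrem2}) that the direct combinatorial route requires identifying specific basis elements with the help of Corollary~\ref{kill1} and Proposition~\ref{handy}, and was originally found only with extensive computer experimentation. The paper's actual proof of Theorem~\ref{prdtsf13m}(b) sidesteps the multi-index bookkeeping entirely: it proceeds by induction on $k$ using the $\mathbb{F}_2$-Serre spectral sequence of the fibration $F(1,2^e-\delta)\to F(1^k,2^e-\delta)\to F(1^{k-1},2^e-\delta+1)$, which collapses because the fiber inclusion is surjective on mod-$2$ cohomology, and reduces everything to the base case $k=2$ (Proposition~\ref{2alae2alae}), which \emph{is} settled by a short explicit calculation. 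So your plan needs either that spectral-sequence reduction or an honest verification, in the spirit of Remarks~\ref{tesisrem1}/\ref{tesisrem2}, that after all reductions to the basis~(\ref{basisf1km}) a specific basis element survives with coefficient $1$.

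Two smaller points. First, the role of the hypothesis $2^e\le m+\delta$ is simply that $2^e-\delta\le m$, so that $F(1^k,2^e-\delta)$ sits inside $F(1^k,m)$ and $\zcl$ is monotone under this restriction; it is not an ``admissibility via binary expansions'' condition. Second, the hypothesis $2\delta\le 2^e$ is the constraint $2^{e-1}\ge\delta$ appearing in Theorem~\ref{prdtsf13m}(b); it is needed in the $k=2$ base computation (and the $\delta\ge2$ iteration) rather than to ``leave room to distribute exponents across both tensor factors.''
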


Of course, the parameter $e$ should be taken as large as possible in order to get the full strength of Theorem~\ref{tcf13mintro}. Two special cases (where the estimate in~(\ref{estimate357}) has a gap of a unit) should be singled out from this result, namely: 
\begin{itemize}
\item $\TC(F(1,2^e))\in\{2^{e+1}-1,2^{e+1}\}$.
\item $\TC(F(1^2,2^e-1))\in\{2^{e+2}-3,2^{e+2}-2\}$.
\end{itemize}
Since $F(1,2^e)$ is the real projective space $\mathbb{R}\mathrm{P}^{2^e}$, the first situation is resolved by the well known equality
\begin{equation}\label{comparacion1}
\TC(\mathbb{R}\mathrm{P}^{2^e})=2^{e+1}-1
\end{equation}
(see~\cite{MR1988783}). It might seem reasonable to expect  $\TC(F(1^2,2^e-1))=2^{e+2}-3$. In any case, proving (disproving) such an equality is equivalent to showing the triviality (non-triviality) of the homotopy obstruction described in~\cite[Theorem~7]{MR2649230} for $X=F(1^2,2^e-1)$. The relevance of such a task becomes apparent by noticing that, as a special case of Theorem~\ref{familias} below, the $\TC_s$-analogue of the above homotopy obstruction does not vanish for these spaces when $s\geq3$:

\begin{theorem}[Theorems~\ref{familiasbis}]\label{familias}
For positive integers $e$, $k$ and $s$ with $e\geq1+\lfloor\frac{k-1}{2}\rfloor$ %\red{, $k\leq4$,} 
and $k\leq3\leq s$, $\TC_s(F(1^k,2^e-k+1))=s\dim(F(1^k,2^e-k+1))$.
\end{theorem}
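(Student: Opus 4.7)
The plan is to sandwich $\TC_s\bigl(F(1^k,2^e-k+1)\bigr)$ between the general upper bound $s\dim X$ and a matching lower bound produced from mod-2 zero-divisor cup-length. The upper bound $\TC_s(X)\leq s\dim X$ is classical: the sectional category of any fibration with a path-connected base $B$ is bounded above by $\dim B$, applied to $e_s\colon P(X)\to X^s$ with $\dim X^s=s\dim X$. So the whole content lies in the reverse inequality, which I obtain from Rudyak's estimate $\TC_s(X)\geq\zcl_s(X)$, where $\zcl_s(X)$ is the largest $n$ for which $n$ classes in $\ker(\Delta_s^{*}\colon H^{*}(X^{s};\mathbb{F}_{2})\to H^{*}(X;\mathbb{F}_{2}))$ multiply to something nonzero.

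Invoking the minimal presentation of $H^{*}(F(1^k,m);\mathbb{F}_{2})$ obtained earlier in the paper (the quotient of $\mathbb{F}_{2}[x_{1},\dots,x_{k}]$ by the components of degree $>m$ of $\prod_{i=1}^{k}(1+x_{i})^{-1}$), the problem becomes purely algebraic. Writing $x_{i}^{(\alpha)}$ for $x_{i}$ in the $\alpha$-th tensor factor of $H^{*}(X^{s};\mathbb{F}_{2})$ and $\bar{x}_{i}^{(\alpha)}:=x_{i}^{(\alpha)}+x_{i}^{(1)}$ for the corresponding basic zero-divisor ($i=1,\dots,k$, $\alpha=2,\dots,s$), I would pick non-negative exponents $n_{i,\alpha}$ summing to $s\dim X=s\bigl(k\cdot 2^{e}-\binom{k}{2}\bigr)$ and prove that $\prod_{i,\alpha}(\bar{x}_{i}^{(\alpha)})^{n_{i,\alpha}}$ coincides with the top class of $X^{s}$. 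The two algebraic shortcuts driving such a calculation are the Frobenius formula $(\bar{x}_{i}^{(\alpha)})^{2^{e}}=(x_{i}^{(1)})^{2^{e}}+(x_{i}^{(\alpha)})^{2^{e}}$ and the one-unit extension
\[
(\bar{x}_{i}^{(\alpha)})^{2^{e}+1}=x_{i}^{(1)}\,(\bar{x}_{i}^{(\alpha)})^{2^{e}}+(x_{i}^{(1)})^{2^{e}}\,\bar{x}_{i}^{(\alpha)},
\]
obtained by expanding $(x_{i}^{(1)}+\bar{x}_{i}^{(\alpha)})^{2^{e}+1}=(x_{i}^{(\alpha)})^{2^{e}+1}$ via Lucas' theorem together with the ambient relation $(x_{i})^{2^{e}+1}=0$ (which for $k=1$ is simply Borel's top relation). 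As a warm-up, for $k=1$, $s=3$, $e\geq 1$ these two identities combine directly to $(\bar{x}^{(2)})^{2^{e+1}-1}(\bar{x}^{(3)})^{2^{e}+1}=x_{1}^{2^{e}}x_{2}^{2^{e}}x_{3}^{2^{e}}$, the top class of $(\mathbb{R}\mathrm{P}^{2^{e}})^{3}$, and the general $k=1$ case follows by distributing the surplus degree over the remaining $s-3$ zero-divisors at the Frobenius-friendly exponent $2^{e}$.

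The hard part is $k\in\{2,3\}$, where the cohomology ring is no longer a truncated polynomial ring and the Borel relations in degrees $m+1,\dots,m+k$ become genuine symmetric-function constraints on the $x_{i}$'s. I would select the exponents $n_{i,\alpha}$ in a staircase pattern centred on $2^{e}$, so that pure $2^{e}$-th powers activate Frobenius while the surplus is absorbed by the one-unit extension above, and then evaluate the product inductively on $\alpha$, at each step pushing the surplus into $x_{i}^{(1)}$-factors that ultimately assemble into the top monomial. The quantitative hypothesis $e\geq 1+\lfloor(k-1)/2\rfloor$ enters precisely here: it guarantees that every $x_{i}^{(1)}$-power arising during the reduction stays below the vanishing threshold imposed by the Borel relations. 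The restriction to $k\leq 3$ reflects that, for $k\geq 4$, those symmetric-function relations start to entangle the monomials in ways the staircase pattern can no longer disentangle; removing this is the open question flagged in the footnote after the statement. Finally, $s\geq 3$ is indispensable, since for $s=2$ only one zero-divisor $\bar{x}_{i}^{(2)}$ per variable is available and the analogous product inevitably lands one unit short of $2\dim X$, reproducing the familiar gap of~(\ref{comparacion1}) for $\mathbb{R}\mathrm{P}^{2^{e}}$.
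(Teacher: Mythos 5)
Your overall strategy matches the paper's exactly: bound $\TC_s$ from above by $s\dim X$ (Proposition~\ref{cotasseq}) and from below by $\zcl_{s,\mathbb{F}_2}$, then certify $\zcl_{s,\mathbb{F}_2}(F(1^k,2^e-k+1))\geq s\dim$ via an explicit nonvanishing product of $s$-th zero-divisors in $H^*(X^s;\mathbb{F}_2)$, exploiting Frobenius squaring and the Borel truncation relations. Your $k=1$ warm-up is correct and is precisely Proposition~\ref{prodsnocerok12}(\ref{paraadelante}); the reduction from general $s\geq 3$ to $s=3$ by padding the remaining factors with maximal exponents is likewise the mechanism of Lemma~\ref{stabilizacion} and Corollary~\ref{monotonicidad}.

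However, for $k\in\{2,3\}$ you have stated a plan rather than a proof, and this is exactly where the actual content of the theorem lives. The phrase ``staircase pattern centred on $2^e$'' undersells the structure the paper uses: the $\alpha=2$ factor is taken with the $\TC_2$-style exponents inherited from Theorem~\ref{prdtsf13m} (total degree $2\dim-\text{deficit}$), the $\alpha=3$ factor is chosen to overshoot $\dim$ by precisely that deficit, and the $\alpha\geq4$ factors each carry the maximal basis exponents $(m+k-1,\dots,m)$. For instance the paper's $k=3$ product is
\[
\bigl(z_{2,1}^{2^{e+1}-1}z_{2,2}^{2^{e+1}-2}z_{2,3}^{2^e-1}\bigr)\cdot\bigl(z_{3,1}^{2^e-1}z_{3,2}^{2^e-1}z_{3,3}^{2^{e+1}-3}\bigr)\cdot\prod_{i=4}^{s}\bigl(z_{i,1}^{2^e}z_{i,2}^{2^e-1}z_{i,3}^{2^e-2}\bigr),
\]
and verifying that this hits the top basis element is a multi-step rewriting via the relations in Proposition~\ref{neglect}, tracking which terms can survive to dimension $s(km+\binom{k}{2})$. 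You have not chosen any concrete exponents for $k=2,3$, nor carried out the nontriviality check, so the argument as written does not yet establish the lower bound in those cases. Your heuristics for why $e\geq 1+\lfloor(k-1)/2\rfloor$ and $k\leq3$ enter, and why $s=2$ falls short by one, are all in the right spirit, but they are commentary on a computation that remains to be done.
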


For instance, when $k=1$, we get $\TC_s(\mathbb{R}\mathrm{P}^{2^e})= 2^e s$ if $s\geq3$ and $e\geq1$, which is certainly {\em not the case} for $s=2$, as noted in~(\ref{comparacion1}). The restriction $e\geq1$ is also needed as $\TC_s(S^1)=s-1$ is well known~(\cite[Section~4]{MR2593704}).

\medskip
It should be noted that Theorem~\ref{familias} can be thought of as a very distinguished manifestation of a more general phenomenon, namely: for fixed $k$ and $m$, the mod-2 cohomological estimates in this paper for $\TC_s(F(1^k,m))$ become sharper as $s$ increases. Such a point will be clarified and worked out in Section~\ref{sechigherTC} of this paper (Remark~\ref{aclaracion} and Corollary~\ref{monotonicidad}). 

\medskip
Other interesting (almost-sharp) estimates for the higher topological complexity of some semi-complete flag manifolds not considered in Theorem~\ref{familias} are discussed in Section~\ref{sechigherTC}. All together, or results seem to point out to what could be the best estimate that purely cohomological methods can yield for the higher topological complexity of semi complete flag manifolds $F(1^k,m)$ (see Remark~\ref{conclusion}).

\medskip
Theorems~\ref{tcf13mintro} and~\ref{familias} (and related results discussed in Section~\ref{sechigherTC}) are based on the identification of suitably long products of zero-divisors. The form of the required factors follows patterns that depend strongly on the value of $k$. The identification of such patterns is a major task in this paper that has greatly benefitted from the help of extensive computer calculations. On the other hand, the complexity of the calculations supporting Theorems~\ref{tcf13mintro} and~\ref{familias} is in sharp contrast with the easy situation for the complex analogues $F_{\mathbb{C}}(n_1,\ldots,n_\ell)$. The latter manifolds are 1-connected and symplectic (even K\"ahler), so their $s$-th topological complexity is well known (and easy to see) to agree with $s(\dim(F_{\mathbb{C}}(n_1,\ldots,n_\ell)))/2$ (see~\cite[Corollary~3.15]{MR3331610}). 

\medskip
The final section of the paper deals with the proof of:

\begin{theorem}[Proposition~\ref{teosurbis}]\label{teosur}
Let $S$ be a closed surface (orientable or not) other than the sphere and the torus. Then $\TC_s(S)=2s$ provided $s\geq3$.
\end{theorem}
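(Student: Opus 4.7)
My plan is to establish $\TC_s(S) = 2s$ by matching the standard upper bound $\TC_s(S) \le s\dim S = 2s$ (which holds because $S$ is a CW complex of dimension two) with a lower bound coming from an explicit product of $2s$ zero-divisors in $H^*(S^s)$ equal to a non-zero scalar multiple of the top class $\omega^{\otimes s}$. Here $\omega$ denotes the mod-$2$ fundamental class in the non-orientable case and the rational fundamental class in the orientable case. I write $\bar\beta^{(i)} := \beta^{(i)} - \beta^{(1)}$ for the canonical zero-divisor attached to $\beta \in H^+(S)$ and $i = 2, \ldots, s$, and split into two cases according to orientability.

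For a non-orientable surface $S = N_g$ with $g \ge 1$ (including $N_1 = \mathbb{R}\mathrm{P}^2$) I work mod $2$. Choose a generator $\alpha \in H^1(N_g; \mathbb{Z}_2)$ coming from one of the $\mathbb{R}\mathrm{P}^2$-summands, so $\alpha^2 = \omega$. The relations $\alpha\omega = 0$ and $\omega^2 = 0$ yield $(\bar\alpha^{(i)})^2 = \omega^{(i)} + \omega^{(1)}$ and $(\bar\alpha^{(i)})^3 = \alpha^{(i)}\omega^{(1)} + \alpha^{(1)}\omega^{(i)}$. The product of $3 + 3 + 2(s-3) = 2s$ zero-divisors
\[
(\bar\alpha^{(2)})^3 \, (\bar\alpha^{(3)})^3 \prod_{i=4}^{s} (\bar\alpha^{(i)})^2
\]
then collapses to $\omega^{\otimes s}$: three of the four cross-terms of $(\bar\alpha^{(2)})^3 (\bar\alpha^{(3)})^3$ vanish by $\omega^2 = 0$ or $\alpha\omega = 0$, and the fourth equals $\alpha^{(1)}\omega^{(2)} \cdot \alpha^{(1)}\omega^{(3)} = \omega^{(1)}\omega^{(2)}\omega^{(3)}$ via $\alpha^2 = \omega$; each subsequent factor $\omega^{(i)} + \omega^{(1)}$ then only contributes through its $\omega^{(i)}$-summand, because the $\omega^{(1)}$-summand meets the $\omega^{(1)}$ already present and is killed by $(\omega^{(1)})^2 = 0$.

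For an orientable surface $S = S_g$ with $g \ge 2$ I work rationally. Pick classes $a, b, c, d \in H^1(S_g; \mathbb{Q})$ coming from two distinct handles, so that $a^2 = b^2 = c^2 = d^2 = 0$, $ab = cd = \omega$, all of $ac, ad, bc, bd$ vanish, and $\omega$ annihilates each of $a, b, c, d$. Setting $T := \prod_{i=2}^{s} \bar a^{(i)} \bar b^{(i)}$, I claim the Farber-style product of $2(s-1) + 2 = 2s$ zero-divisors
\[
T \cdot \bar c^{(2)} \bar d^{(2)}
\]
equals $2\,\omega^{\otimes s}$. Expanding $\bar c^{(2)}\bar d^{(2)} = \omega^{(1)} + \omega^{(2)} - c^{(2)}d^{(1)} - c^{(1)}d^{(2)}$, the last two ``cross'' pieces annihilate every monomial of $T$, since $c$ and $d$ are orthogonal to $a$, $b$ and $\omega$. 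A case analysis on the expansion of $T$ shows that, among all monomials carrying $\omega$ in positions $3, \ldots, s$, only two survive the multiplication by $\omega^{(1)} + \omega^{(2)}$: the all-$\omega^{(i)}$ choice paired with $\omega^{(1)}$, and the choice where factor $i = 2$ contributes $\omega^{(1)}$ and all other factors contribute $\omega^{(i)}$, paired with $\omega^{(2)}$; each of these two contributions is $+\omega^{\otimes s}$, giving $2\omega^{\otimes s}$ in total.

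The main technical obstacle is the orientable step: one must verify that no third monomial of $T$ contributes to the top class and that the two surviving contributions add, rather than cancel, to $2\omega^{\otimes s}$. Since $2 \neq 0$ in $\mathbb{Q}$, the resulting class is non-zero, giving $\TC_s(S_g) \ge 2s$ and, together with the upper bound, completing the theorem.
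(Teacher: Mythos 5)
Your proof is correct, and its non-orientable half is essentially the paper's own argument in disguise: the paper factors through the ring monomorphism $\varphi_s\colon H^*(\mathbb{R}\mathrm{P}^2)^{\otimes s}\hookrightarrow H^*(N_g)^{\otimes s}$ and invokes Proposition~\ref{prodsnocerok12}(\ref{paraadelante}) at $e=1$ (which gives precisely $z_{2,1}^{3}z_{3,1}^{3}\prod_{i\ge4}z_{i,1}^{2}\neq0$ in $H^*(\mathbb{R}\mathrm{P}^2)^{\otimes s}$), whereas you carry out that same mod-$2$ computation directly inside $H^*(N_g)^{\otimes s}$ using $\alpha^2=\omega$, $\alpha\omega=0$, $\omega^2=0$. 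Both versions produce the identical product of $2s$ zero-divisors $(\bar\alpha^{(2)})^3(\bar\alpha^{(3)})^3\prod_{i\ge4}(\bar\alpha^{(i)})^2=\omega^{\otimes s}$, so the lower bound $2s$ follows; the upper bound $\TC_s\le s\dim=2s$ is Proposition~\ref{cotasseq}.

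Where you genuinely diverge is the orientable case: the paper simply points to~\cite{morfismos} and gives no argument here, whereas you supply a self-contained rational computation. Your argument is sound. Using a symplectic basis with $ab=cd=\omega$ and $a,b\perp c,d$, the two cross terms $c^{(2)}d^{(1)},c^{(1)}d^{(2)}$ of $\bar c^{(2)}\bar d^{(2)}$ cannot reach the top class (any completion from $T$ would have to involve a factor of $a,b$, or $\omega$ in position $1$ or $2$, all of which kill $c$ or $d$). For $i\ge3$, $\bar a^{(i)}\bar b^{(i)}$ is the only factor touching position $i$, so only its $\omega^{(i)}$ summand survives; the remaining product $\bar a^{(2)}\bar b^{(2)}\cdot(\omega^{(1)}+\omega^{(2)})$ contributes $\omega^{(2)}\omega^{(1)}+\omega^{(1)}\omega^{(2)}=2\omega^{(1)}\omega^{(2)}$ (the other six cross terms die by $a\omega=b\omega=0$ or $(\omega^{(j)})^2=0$), giving $2\omega^{\otimes s}\neq0$ over $\mathbb{Q}$. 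This buys a single, explicit, reference-free proof of the orientable case, which the present paper does not include; the trade-off is that you work with two different coefficient rings ($\mathbb{Z}_2$ and $\mathbb{Q}$) while the paper's reliance on Proposition~\ref{prodsnocerok12} keeps everything in the unified framework developed for flag manifolds.
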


It is folklore that the conclusion of Theorem~\ref{teosur} holds true for $s=2$ if $S$ is orientable and has genus at least 2. What is most interesting is to compare Theorem~\ref{teosur} with the fact that the precise value of Farber's topological complexity of non-orientable surfaces of genus at least 2 has become an intriguing open question in the field\footnote{There is an argument in the recent preprint~\cite{drani} for the equality $\TC_2(N_g)=4$ for $g\geq4$.}. 

\medskip
Theorem~\ref{teosur}, which will be a relatively easy consequence of the calculations supporting Theorem~\ref{familias}, gives of course an infinite family of spaces for which, just as for the flag manifolds in Theorem~\ref{familias}, the $\TC_s$ accessibility contrasts with the hardness of the $\TC_2$ situation. It would be interesting to know if such a phenomenon holds for other families of spaces.

\section{Cohomology and LS-category of $F(1^k,m)$}\label{cohomologiuadeunfactor}
Unless otherwise noted, all cohomology rings we deal with have $\mathbb{F}_2$-coefficients. For $i\geq0$, let $e_i$ denote the $i$-th elementary symmetric polynomial, and $h_i$ denote the $i$-th complete symmetric polynomial ($e_0=h_0=1$). In both cases the relevant variables will be explicitly indicated.

\begin{proposition}\label{f1km}
Let $m\geq1$. A minimal presentation for the ring $H^*(F(1^k,m))$ is given by generators $x_i$, $1\leq i\leq k$, all of dimension $1$, subject to the relations
\begin{equation}\label{mrf1km}
h_{m+i}(x_1,\ldots,x_{k+1-i})=0,\quad1\leq i\leq k.
\end{equation}
A graded additive basis for $H^*(F(1^k,m))$ is given by the monomials
\begin{equation}\label{basisf1km}
x(n_1,\ldots,n_k):=\prod_{i=1}^kx_i^{n_i}
\end{equation}
where $n_i\leq m+k-i$, for $i=1,\ldots,k$.
\end{proposition}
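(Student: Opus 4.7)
The plan is to start from Borel's classical presentation of the mod-$2$ cohomology of a real flag manifold, triangularize the Borel relations using a standard recurrence for complete symmetric polynomials, and then extract the additive basis via a Gr\"obner-type argument. I would proceed in three stages.

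First, I would invoke Borel's theorem. Over $F(1^k,m)$ the tautological bundles $L_1,\ldots,L_k,V$ of respective ranks $1,\ldots,1,m$ have Whitney sum the trivial $(m+k)$-bundle; setting $x_i=w_1(L_i)$ and $w_j=w_j(V)$, $H^*(F(1^k,m))$ is the quotient of $\mathbb{F}_2[x_1,\ldots,x_k,w_1,\ldots,w_m]$ by the graded components of the identity $\prod_{i=1}^k(1+x_i)\cdot(1+w_1+\cdots+w_m)=1$. Matching coefficients in degrees $1,\ldots,m$ and using the mod-$2$ Newton identity $\sum_a e_a h_{d-a}=0$ recursively yields $w_j=h_j(x_1,\ldots,x_k)$, which eliminates the $w_j$'s from a minimal generating set. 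After this substitution, the surviving relations in degrees $m+1,\ldots,m+k$ simplify, once more by Newton, to $h_{m+i}(x_1,\ldots,x_k)=0$ for $1\leq i\leq k$.

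The main technical step, and where I expect the chief obstacle, is to rewrite these $k$ relations in the nested form $g_i:=h_{m+i}(x_1,\ldots,x_{k+1-i})$ of~(\ref{mrf1km}). The only tool needed is the elementary recurrence $h_n(y_1,\ldots,y_j)=h_n(y_1,\ldots,y_{j-1})+y_jh_{n-1}(y_1,\ldots,y_j)$, but the bookkeeping requires an auxiliary claim: \emph{modulo $\{g_1,\ldots,g_{i-1}\}$, one has $h_{m+j}(x_1,\ldots,x_r)\equiv 0$ for every $1\leq j\leq i-1$ and every $k+1-j\leq r\leq k$.} I would prove this by a double induction, outer on $j$ and inner on $r-(k+1-j)$, with the recurrence supplying the step. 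Granted the claim, iterating the recurrence on $h_{m+i}(x_1,\ldots,x_k)$ and successively killing the $x_r h_{m+i-1}(x_1,\ldots,x_r)$ summands yields $h_{m+i}(x_1,\ldots,x_k)\equiv g_i$, so the two relation sets generate the same ideal.

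The remaining points are then routine. In the lexicographic order $x_1<x_2<\cdots<x_k$, the leading monomial of $g_i$ is $x_{k+1-i}^{m+i}$, and these leading monomials are powers of pairwise distinct variables. By Buchberger's first criterion $\{g_1,\ldots,g_k\}$ is therefore a Gr\"obner basis of the relation ideal, so the quotient has as $\mathbb{F}_2$-basis exactly the monomials avoiding all of $x_k^{m+1},x_{k-1}^{m+2},\ldots,x_1^{m+k}$, i.e., precisely the monomials of~(\ref{basisf1km}). Pairwise coprimality of the leading terms also makes $g_1,\ldots,g_k$ a regular sequence in $\mathbb{F}_2[x_1,\ldots,x_k]$, so the presentation is a complete intersection with exactly $k$ generators and $k$ relations; combined with $H^1(F(1^k,m))\cong\mathbb{F}_2^k$, this yields the minimality claim.
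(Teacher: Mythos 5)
Your argument is correct, but it is a genuinely different proof from the one in the paper. You establish the ideal equality $\bigl(h_{m+1}(x_1,\ldots,x_k),\ldots,h_{m+k}(x_1,\ldots,x_k)\bigr)=(g_1,\ldots,g_k)$ by the triangular change of basis, and then deduce the additive basis by observing that the leading monomials $x_{k+1-i}^{m+i}$ are pairwise coprime so that Buchberger's first criterion makes $\{g_1,\ldots,g_k\}$ a Gr\"obner basis; minimality then falls out of the complete-intersection structure. This is in essence the route of Petrovi\'c and Prvulovi\'c, which the paper explicitly contrasts with its own approach in Remark~\ref{hausmann}. The paper instead avoids Gr\"obner machinery: after showing via the recurrence (\ref{descomponerinicio}) that the monomials of~(\ref{basisf1km}) span, it computes the $\mathbb{F}_2$-Poincar\'e polynomial from the Serre spectral sequence of $F(1^{k-1},m)\to F(1^k,m)\to\mathbb{R}\mathrm{P}^{m+k-1}$ (whose collapse is guaranteed by surjectivity of $\iota^*$), and closes by a dimension count $P(1)=\prod_i(m+i)$. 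Your version buys a clean, purely algebraic argument with an explicit normal-form algorithm and a transparent justification of minimality via regular sequences; the paper's version keeps the tools homotopy-theoretic and reuses exactly the same spectral-sequence mechanism later (e.g.\ in the proofs of Lemma~\ref{2alae} and Proposition~\ref{2alae2alae}), which is why the authors preferred it. One small point worth making explicit in your write-up: when you say the upper-triangular change of relations shows the two ideals coincide, you should note that the diagonal coefficients are $1$, so the unitriangular transition matrix over $\mathbb{F}_2[x_1,\ldots,x_k]$ is invertible; that is what makes the two generating sets generate the same ideal rather than one merely containing the other.
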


\begin{remark}\label{hausmann}{\em
The above presentation is a strong generalization of the one given in~\cite[Example~9.5.17]{hausmann} for complete flags---the latter one is not minimal. The direct proof below should be compared to~\cite{MR3007918}, a paper devoted to the proof (using Gr\"obner bases) of Proposition~\ref{f1km}.
}\end{remark}

\begin{proof}[Proof of Proposition~\ref{f1km}]
For $i=1,\ldots,k+1$, let $\gamma_i$ stand for the $i$-th tautological bundle on $F(1^k,m)$, and set $x_i=w_1(\gamma_i)$ for $i\leq k$, and $w_j=w_j(\gamma_{k+1})$ for $j\geq0$, the indicated Stiefel-Whitney classes. Borel's (non-minimal) presentation of $H^*(F(1^k,m))$ has generators $x_i$ and $w_j$ with the single (non-homogeneous) relation $$\sum_{j\geq0}w_j\prod_{i=1}^{k}(1+x_i)=1.$$ This expression's component in dimension $j>0$ is
\begin{equation}\label{reh}
\sum_{0\leq t\leq j}w_{j-t}e_t(x_1,\ldots,x_k)=0.
\end{equation}
In particular, for $j=1$, we get $w_1=e_1(x_1,\ldots,x_k)=h_1(x_1,\ldots,x_k)$. Assuming inductively that $w_\ell=h_\ell(x_1,\ldots,x_k)$ for $\ell<j$,~(\ref{reh}) gives $$w_j=\sum_{1\leq t\leq j}h_{j-t}(x_1,\ldots,x_k)e_t(x_1,\ldots,x_k)=h_j(x_1,\ldots,x_k).$$ This uses the basic relation between elementary and complete symmetric polynomials
\begin{equation}\label{basrelbeteleandcomsympols}
\sum_{t=0}^j(-1)^te_t(x_1,\ldots,x_k)h_{j-t}(x_1,\ldots,x_k)=0.
\end{equation}
Therefore, the generators $w_j$ are superfluous and, since $w_j=0$ for $j>m$, we get $h_{m+i}(x_1,\ldots,x_k)=0$ for $i>0$. This is~(\ref{mrf1km}) if $i=1$, otherwise use
\begin{equation}\label{descomponerinicio}
h_{m+i}(x_1,\ldots,x_k)=h_{m+i}(x_1,\ldots,x_{k-1})+x_kh_{m+i-1}(x_1,\ldots,x_k)
\end{equation}
to get $h_{m+i}(x_1,\ldots,x_{k-1})=0$ for $i>1$. Iteration of this argument yields~(\ref{mrf1km}). Further, these equations can be used to write any power $x_i^{\ell}$ with $\ell>m+k-i$ in terms of powers $x_j^n$ with $j<i$ or $n<\ell$. This shows that the monomials in~(\ref{basisf1km}) are additive generators of $H^*(F(1^k,m))$. 

On the other hand, the inclusion of the fiber in the total space of the fibration $F(1^{k-1},m)\to F(1^k,m)\to\mathbb{R}P^{m+k-1}$ is surjective in mod~2 cohomology. Therefore the corresponding $\mathbb{F}_2$-Serre spectral sequence has trivial coefficients and collapses from its second stage (cf.~Theorem~4.4 in page 126 of~\cite[Part~I]{MR1122592}). An easy inductive argument\footnote{Alternatively see~\cite[Corollary~9.5.15]{hausmann}.} then shows that the $\mathbb{F}_2$-Poincar\'e polynomial of $F(1^k,m)$ is $$P(x)=\prod_{i=1}^k\frac{\;\;1-x^{i+m\;}}{\!\!\!\!\!\!1-x}.$$ The proof is complete since $P(1)=\prod_{i=1}^k(m+i)$, which is the number of monomials in~(\ref{basisf1km}).
\end{proof}

The relations in~(\ref{mrf1km}) are a distilled form of a more general (equivalent but non-minimal) set of relations: our proof gives in fact
\begin{equation}\label{opgif}
h_{m+i}(x_1,\ldots,x_{k-j})=0 \quad\mbox{if}\quad i>j\geq0.
\end{equation}
(Alternatively,~(\ref{opgif}) is a consequence of~(\ref{mrf1km}) and the obvious inclusions $F(1^k,m)\hookrightarrow F(1^k,m+1)\hookrightarrow F(1^k,m+2) \hookrightarrow\cdots$.) In addition, the obvious action of the symmetric group $\Sigma_k$ on (the cohomology of) $F(1^k,m)$ implies that the relations in~(\ref{opgif}) extend to
\begin{equation}\label{relacionesextendidas}
h_{m+i}(x_{\ell_1},\ldots,x_{\ell_{k-j}})=0
\end{equation}
for any $1\leq\ell_1<\cdots<\ell_{k-j}\leq k$ with $0\leq j<i\leq k$. For instance,
\begin{equation}\label{altuhigh}
x_i^{m+k}=0 \neq x_i^{m+k-1}\mbox{ \ for any \ }i=1,\ldots,k,
\end{equation} 
where the non-triviality of $x_i^{m+k-1}$ comes from~(\ref{basisf1km}). As noted in~\cite[Example~3.1]{MR3007918}, this recovers the calculation in~\cite{MR2029922} of the heights of the generators $x_i$'s. Proposition~\ref{f1km} also allows us to recover the calculation of $\cat(F(1^k,m))$ in~\cite{MR2029922} (we use the normalized version of the Lusternik-Schnirelmann category, so that a contractible space $X$ has $\cat(X)=0$):

\begin{corollary}\label{lscatf1km}
$\cat(F(1^k,m))=\dim(F(1^k,m))=km+k(k-1)/2$.
\end{corollary}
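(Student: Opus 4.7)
The plan is to bracket $\cat(F(1^k,m))$ by its standard upper and lower bounds and observe that the two match.

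First I would handle the upper bound. For any CW complex $X$, the normalized Lusternik--Schnirelmann category satisfies $\cat(X)\leq\dim(X)$, so it suffices to compute $\dim(F(1^k,m))$. Viewing $F(1^k,m)$ as a homogeneous space $O(m+k)/(O(1)^k\times O(m))$, a direct dimension count gives
\[
\dim(F(1^k,m))=\binom{m+k}{2}-\binom{m}{2}=km+\frac{k(k-1)}{2}.
\]
(Alternatively, this can be read off inductively from the fibration $F(1^{k-1},m)\to F(1^k,m)\to\mathbb{R}P^{m+k-1}$ used in the proof of Proposition~\ref{f1km}.)

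Next I would establish the matching lower bound via the cup-length, using the mod-$2$ cohomological information already in hand. The normalized LS-category is bounded below by the mod-$2$ cup-length, so it is enough to exhibit a nontrivial product of $km+k(k-1)/2$ positive-degree classes. The natural candidate is
\[
\prod_{i=1}^{k}x_i^{\,m+k-i},
\]
whose total degree is $\sum_{i=1}^{k}(m+k-i)=km+k(k-1)/2$. By the additive basis~(\ref{basisf1km}), the monomial $x(m+k-1,m+k-2,\ldots,m)$ is precisely the top basis element (each exponent $n_i=m+k-i$ meets, and does not exceed, the allowed bound), hence it is nonzero and represents the fundamental class. Writing each factor $x_i^{m+k-i}$ as a product of $m+k-i$ one-dimensional classes, this exhibits a nonzero cup product of length $km+k(k-1)/2$.

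Combining the two bounds yields $\cat(F(1^k,m))=\dim(F(1^k,m))=km+k(k-1)/2$. There is no real obstacle here: the only content needed beyond Proposition~\ref{f1km} is the elementary dimension count and the recognition that the product of top powers of the $x_i$'s is exactly the top basis monomial, so the inequalities pinch at the value $km+k(k-1)/2$.
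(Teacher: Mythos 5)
Your proposal is correct and follows essentially the same route as the paper: bound $\cat$ above by $\dim(F(1^k,m))=km+k(k-1)/2$ and below by the mod-$2$ cup-length, witnessed by the nonvanishing of the top monomial $x_1^{m+k-1}x_2^{m+k-2}\cdots x_k^m$ from the additive basis~(\ref{basisf1km}). The only difference is cosmetic: you spell out the dimension count for the homogeneous space, which the paper treats as well known.
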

\begin{proof}
It is well known that $km+k(k-1)/2=\dim(F(1^k,m))\geq\cat(F(1^k,m))$. The latter term is bounded from below by the $\mathbb{F}_2$-cup-length of $F(1^k,m)$ which, in view of Proposition~\ref{f1km}, is no less than $km+k(k-1)/2$ since  $x_1^{m+k-1}x_2^{m+k-2}\cdots x_k^{m}\neq0$.
\end{proof}

\begin{corollary}\label{kill1}
The annihilator of the (non-trivial) class $$x_1^{m+k-1}x_2^{m+k-2}\cdots x_k^m\in H^*(F(1^k,m))$$ is the maximal ideal $H^{{}>0}(F(1^k,m))$ of positive-degree elements. More precisely, $$x_1^{m+k-1}x_2^{m+k-2}\cdots x_j^{m+k-j}x_j=0$$ for $1\leq j \leq k$.
\end{corollary}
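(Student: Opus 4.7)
My plan is to split the proof into two parts: the annihilator claim, which I would dispatch by a degree argument, and the explicit identities, which I would prove by induction on $j$.

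First, for the annihilator claim, the monomial $\omega:=x_1^{m+k-1}x_2^{m+k-2}\cdots x_k^m$ appears in the additive basis (\ref{basisf1km}), hence is non-zero, and by Corollary~\ref{lscatf1km} its degree equals $\dim F(1^k,m)$. Since $F(1^k,m)$ is a closed manifold, $H^{d}(F(1^k,m))=0$ for $d>\dim F(1^k,m)$; therefore every class in $H^{>0}$ multiplied with $\omega$ sits in the zero group. Combined with $1\cdot\omega\neq 0$, this identifies the annihilator of $\omega$ with $H^{>0}$.

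For the finer identities $S(j):\ x_1^{m+k-1}x_2^{m+k-2}\cdots x_j^{m+k-j}\cdot x_j=0$, I would set $N_i:=m+k-i$, so that $S(j)$ reads $x_1^{N_1}\cdots x_{j-1}^{N_{j-1}} x_j^{N_{j-1}}=0$, and I would induct on $j$. The base $j=1$ is the height identity $x_1^{m+k}=0$ from (\ref{altuhigh}). For the inductive step, I would apply the relation (\ref{mrf1km}) with index $i=k+1-j$, namely $h_{N_{j-1}}(x_1,\ldots,x_j)=0$, and solve for the $x_j$-pure term to write
\[
x_j^{N_{j-1}}=\sum_{\vec{a}} x_1^{a_1}\cdots x_j^{a_j},
\]
the sum ranging over tuples with $\sum a_i=N_{j-1}$ and $(a_1,\ldots,a_{j-1})\neq 0$. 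Substituting into $S(j)$, each resulting summand has the form $x_1^{N_1+a_1}\cdots x_{j-1}^{N_{j-1}+a_{j-1}} x_j^{a_j}$, and I would analyze it by letting $i^*\in\{1,\ldots,j-1\}$ be the smallest index with $a_{i^*}\geq 1$. If $a_{i^*}\geq i^*$, the $x_{i^*}$-exponent already reaches $m+k$ and the summand dies by (\ref{altuhigh}). Otherwise $1\leq a_{i^*}\leq i^*-1$, and since $N_{i^*-1}=N_{i^*}+1$, I would rewrite the summand as
\[
\bigl(x_1^{N_1}\cdots x_{i^*-1}^{N_{i^*-1}} x_{i^*}^{N_{i^*-1}}\bigr)\cdot\bigl(x_{i^*}^{a_{i^*}-1}x_{i^*+1}^{N_{i^*+1}+a_{i^*+1}}\cdots x_{j-1}^{N_{j-1}+a_{j-1}} x_j^{a_j}\bigr);
\]
the first factor is precisely $S(i^*)$ with $i^*<j$, so it vanishes by the inductive hypothesis.

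The main obstacle I anticipate is arranging this case split so that every summand is seen to vanish: a naive substitution using (\ref{mrf1km}) does not push the $x_1$-exponent past its height $m+k-1$ in every summand, and it is precisely the borderline cases (small $a_{i^*}$, large $i^*$) that force one to chain the intermediate statements $S(1)\Rightarrow S(2)\Rightarrow\cdots\Rightarrow S(k)$ rather than attack $S(k)$ in isolation. The delicate observation is that the residual exponent $a_{i^*}-1$ that appears after factoring out $S(i^*)$ is non-negative exactly because $a_{i^*}\geq 1$ by choice of $i^*$.
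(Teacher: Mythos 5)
Your proof is correct, and the inductive argument for the identities $S(j)$ is essentially the same as the paper's (apply (\ref{mrf1km}) with $i=k-j+1$, i.e.\ $h_{m+k-j+1}(x_1,\ldots,x_j)=0$, multiply by $x_1^{m+k-1}\cdots x_{j-1}^{m+k-j+1}$, and use the inductive hypothesis to kill every summand except the $x_j$-pure one). Two small remarks. First, your two-case split is more elaborate than necessary: once one knows $S(1),\ldots,S(j-1)$, the product $x_1^{N_1}\cdots x_{j-1}^{N_{j-1}}\cdot x_t$ vanishes for \emph{every} $t<j$ (factor it as $\bigl(x_1^{N_1}\cdots x_t^{N_t+1}\bigr)\cdot\bigl(x_{t+1}^{N_{t+1}}\cdots x_{j-1}^{N_{j-1}}\bigr)$ and apply $S(t)$), so any summand with $a_{i^*}\geq 1$ dies at once without distinguishing whether the $x_{i^*}$-exponent overshoots $m+k$; your Case~1 is a special instance of Case~2. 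Second, the paper's one-line proof only addresses the displayed identities and treats the annihilator statement as an immediate consequence (since $H^{>0}$ is additively generated by monomials, each divisible by some $x_j$); your Poincar\'e-duality / top-degree argument is an equally valid, and arguably cleaner, way to get that part, and has the advantage of making clear that the annihilator claim is automatic for \emph{any} nonzero top-degree class of a closed manifold, with the content of the corollary residing entirely in the displayed identities.
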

\begin{proof}
Apply, inductively on $j$, the relation~(\ref{mrf1km}) with $i=k-j+1$.
\end{proof}

Earlier versions of this work used the following generalization of the relations in~(\ref{opgif}):
\begin{proposition}\label{handy}
For $1\leq i\leq k$ and $j,\ell\geq0$, set $\tau(i,\ell)=(x_1\cdots x_i)^\ell$.
In the ring $H^*(F(1^k,m))$ we have
\begin{equation}\label{relsgenzadas}
\tau(i,\ell) h_j(x_1,\ldots,x_i)=0
\end{equation}
provided $i+j+\ell>m+k$.
\end{proposition}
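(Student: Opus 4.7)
The plan is to decompose $\tau(i,\ell)\,h_j(x_1,\ldots,x_i)$ into a sum of terms each of which vanishes in $H^*(F(1^k,m))$ by an instance of (\ref{relacionesextendidas}).

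First, I would expand
\[
\tau(i,\ell)\,h_j(x_1,\ldots,x_i)=\sum_{\substack{b_1,\ldots,b_i\geq\ell\\ b_1+\cdots+b_i=i\ell+j}}x_1^{b_1}\cdots x_i^{b_i}
\]
(obtained by setting $b_s=\ell+a_s$ in the monomial expansion of $h_j$). Using the indicator identity $\mathbf{1}[b_s\geq\ell]=\mathbf{1}[b_s\geq 0]-\mathbf{1}[0\leq b_s\leq\ell-1]$ and inclusion--exclusion (with signs collapsing mod $2$), one obtains the polynomial identity
\[
\tau(i,\ell)\,h_j(x_1,\ldots,x_i)=\sum_{T\subseteq\{1,\ldots,i\}}\ \sum_{(b_t)_{t\in T}\in[0,\ell-1]^{|T|}}\Bigl(\prod_{t\in T}x_t^{b_t}\Bigr)\,h_{N_T}(x_s:s\notin T),
\]
where $N_T=i\ell+j-\sum_{t\in T}b_t$, valid in $\mathbb{F}_2[x_1,\ldots,x_k]$.

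Next I would verify that, under the hypothesis $i+j+\ell>m+k$, each summand vanishes when reduced to $H^*(F(1^k,m))$. For $T\subsetneq\{1,\ldots,i\}$, the factor $h_{N_T}(x_s:s\notin T)$ involves $p:=i-|T|\geq 1$ of the generators, and (\ref{relacionesextendidas}) sets it to zero as soon as $N_T+p>m+k$. Since $\sum_{t\in T}b_t\leq|T|(\ell-1)$, we have $N_T\geq(i-|T|)\ell+|T|+j$, whence $N_T+p\geq(i-|T|)\ell+i+j$; the hypothesis $\ell>m+k-i-j$ together with $i-|T|\geq 1$ forces this to exceed $m+k$. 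The remaining case $T=\{1,\ldots,i\}$ leaves $h_{N_T}$ on the empty variable set, equal to $[N_T=0]$; but $N_T\geq i\ell+j-i(\ell-1)=i+j\geq 1$, so this summand is likewise zero.

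The main obstacle is choosing the right decomposition: a naive induction on $\ell$ (or on $j$) stalls precisely at the boundary $i+j+\ell=m+k+1$, since passing from $\tau(i,\ell-1)$ to $\tau(i,\ell)$ via multiplication by $x_1\cdots x_i$ does not deliver the required cohomological vanishing in a single step, and the recursion $h_j=\sum e_t h_{j-t}$ shuffles degrees in a way that leaves no margin at the boundary. The inclusion--exclusion identity packages all the relevant subset instances of (\ref{relacionesextendidas}) simultaneously; the two observations that make the argument succeed are that mod $2$ cancels every sign, and that the degenerate case $T=\{1,\ldots,i\}$ produces a ``hollow'' factor that vanishes whenever $j\geq 0$.
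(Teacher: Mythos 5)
Your proof is correct, and it takes a genuinely different route from the paper's. The paper argues by induction on $i$: $\tau(i+1,\ell)\,h_j(x_1,\ldots,x_{i+1})$ is rewritten, modulo terms killed by the inductive hypothesis for larger $j$, as $\tau(i,\ell)\,h_{\ell+j}(x_1,\ldots,x_{i+1})$, which then vanishes because $h_{\ell+j}(x_1,\ldots,x_{i+1})$ is already zero by~(\ref{opgif}). Your inclusion--exclusion identity instead unwinds $\tau(i,\ell)h_j$ in one stroke as an explicit $\mathbb{F}_2$-sum of multiples of complete symmetric polynomials in proper subsets of $\{x_1,\ldots,x_i\}$, each of which then dies by the subset relations. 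The paper's induction is shorter and leans on the recursion $h_{\ell+j}(x_1,\ldots,x_{i+1})=\sum_a x_{i+1}^a\,h_{\ell+j-a}(x_1,\ldots,x_i)$; your decomposition makes the combinatorics fully explicit, avoids the recursion entirely, and exhibits exactly which instances of the subset relations are being used.

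One point needs tightening. As stated in the paper, (\ref{relacionesextendidas}) carries the hypothesis $0\le j<i\le k$, i.e.~$m<N\le m+k$ for the degree $N$ of the vanishing $h_N$ on $p=k-j$ variables; but your $N_T=i\ell+j-\sum_{t\in T}b_t$ can exceed $m+k$ (already for $T=\emptyset$ when $\ell$ is large), so the citation does not literally apply. The vanishing of $h_N$ on $p$ variables for \emph{every} $N$ with $N+p>m+k$ does hold, but it needs one more line: (\ref{relacionesextendidas}) supplies the $p$ consecutive vanishings $h_{m+k-p+1}=\cdots=h_{m+k}=0$ on those variables, and the mod-$2$ recursion $h_N=\sum_{t=1}^{p}e_t\,h_{N-t}$ obtained from~(\ref{basrelbeteleandcomsympols}) restricted to $p$ variables then propagates the vanishing to all larger $N$. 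With this extension stated, your argument is complete.
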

\begin{proof}
By induction on $i$. For $i=1$, $\tau(i,\ell) h_j(x_1,\ldots,x_i) =x_1^{\ell+j}$ which vanishes provided $\ell+j+1>m+k$ in view of~(\ref{mrf1km}). Assume the result is valid for a fixed $i$ with $1\leq i<k$, and assume in addition $(i+1)+j+\ell>m+k$, then
\begin{eqnarray*}
\lefteqn{\tau(i+1,\ell) h_j(x_1,\ldots, x_{i+1})=}\\
& = & \tau(i,\ell)x_{i+1}^\ell\left(x_{i+1}^j+x_{i+1}^{j-1}h_1(x_1,\ldots,x_i)+\cdots+ h_j(x_1,\ldots,x_i)\right)\\&=&\tau(i,\ell)\left(x_{i+1}^{\ell+j}+x_{i+1}^{\ell+j-1}h_1(x_1,\ldots x_i)+\cdots+x_{i+1}^\ell h_j(x_1,\ldots x_i)\right)\\
& = & \tau(i,\ell)\hspace{.3mm}h_{\ell+j}(x_1,\ldots,x_{i+1}).
\end{eqnarray*}
The last equality uses the inductive relation $\tau(i,\ell)h_{j+p}(x_1,\ldots, x_i)=0$ when $p\geq1$. The result then follows since $h_{\ell+j}(x_1,\ldots,x_{i+1})=0$ in view of~(\ref{opgif}).
\end{proof}

For example, we have $(x_1\cdots x_i)^{\ell}=0$ for $1\leq i\leq k$ and $i+\ell>m+k$.

\medskip
More important for our later purposes is the fact that, as indicated in the proof of Proposition~\ref{f1km}, the extended relations in~(\ref{opgif}) can be used in an inductive way to write any polynomial in the $x_i$'s in terms of the basis~(\ref{basisf1km}). We next show that the resulting process can be written down with a nice closed formula if certain basis elements are to be neglected.

\begin{proposition}\label{neglect}
Let $0\leq j\leq i\leq k$ with $i\geq 1$. In terms of the basis~$(\ref{basisf1km})$, all basis elements $x(n_1,\ldots,n_k)$ appearing in the expression of
$$
x_i^{m+k-j}+x_i^{m+k-i} e_{i-j}(x_1,\ldots x_{i-1})\in H^*(F(1^k,m))
$$
have $n_i<m+k-i$ and $n_\ell=0$ for $\ell>i$.
\end{proposition}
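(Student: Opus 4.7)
The plan is to rewrite the stated combination as a polynomial in $x_1,\ldots,x_i$ whose $x_i$-degree is strictly below $m+k-i$, after which passage to the basis~(\ref{basisf1km}) will preserve these bounds. The extreme cases are trivial: when $j=0$ both summands vanish, since $x_i^{m+k}=0$ by~(\ref{altuhigh}) and $e_i(x_1,\ldots,x_{i-1})=0$; when $j=i$ the two summands coincide and cancel in characteristic~$2$. So assume $1\leq j\leq i-1$.

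The key input is the generating-function identity
\begin{equation*}
\prod_{\ell=1}^{i-1}(1-x_\ell t)\cdot\prod_{\ell=1}^{i}\frac{1}{1-x_\ell t}=\frac{1}{1-x_i t},
\end{equation*}
whose $t^{m+k-j}$-coefficient gives (in $\mathbb{F}_2$)
\begin{equation*}
x_i^{m+k-j}=\sum_{s=0}^{i-1}e_s(x_1,\ldots,x_{i-1})\,h_{m+k-j-s}(x_1,\ldots,x_i).
\end{equation*}
By~(\ref{opgif}) applied to the variable set $\{x_1,\ldots,x_i\}$, the factor $h_{m+k-j-s}(x_1,\ldots,x_i)$ vanishes whenever $s\leq i-j-1$, so the sum collapses to the range $i-j\leq s\leq i-1$.

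Next I would isolate the $s=i-j$ summand and expand $h_{m+k-i}(x_1,\ldots,x_i)=\sum_{n=0}^{m+k-i}x_i^{n}\,h_{m+k-i-n}(x_1,\ldots,x_{i-1})$; its $n=m+k-i$ contribution is precisely $x_i^{m+k-i}e_{i-j}(x_1,\ldots,x_{i-1})$, which cancels the added correction term. In each of the remaining summands ($s\geq i-j+1$) the factor $h_{m+k-j-s}(x_1,\ldots,x_i)$ has degree $\leq m+k-i-1$, so the same $h_r$-expansion shows that every monomial in the full expression is a product of an element of $\mathbb{F}_2[x_1,\ldots,x_{i-1}]$ with $x_i^n$ for some $n<m+k-i$. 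In particular no $x_\ell$ with $\ell>i$ occurs.

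Finally, to place the resulting polynomial into the additive basis~(\ref{basisf1km}), I would successively trim the powers of $x_{i-1},\ldots,x_1$ using the relations $h_{m+k-\ell+1}(x_1,\ldots,x_\ell)=0$ coming from~(\ref{relacionesextendidas}) for each $\ell<i$. Since each such relation lies in $\mathbb{F}_2[x_1,\ldots,x_\ell]$, these reductions introduce neither a variable $x_\ell$ with $\ell>i$ nor any change in the $x_i$-exponent, and the claim follows. The main bookkeeping hurdle is recognizing the $s=i-j$ summand as the source of the cancellation with $x_i^{m+k-i}e_{i-j}(x_1,\ldots,x_{i-1})$; the generating-function identity above makes this transparent.
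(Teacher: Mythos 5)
Your proof is correct and reaches the result by a genuinely different route than the paper's. The paper dispatches $j\in\{0,i\}$, handles $j=i-1$ directly by rewriting the relation $h_{m+k-i+1}(x_1,\ldots,x_i)=0$ in the form~(\ref{inicioinduction}), and then covers $0<j<i-1$ by an unspelled decreasing induction on $j$ driven by~(\ref{basrelbeteleandcomsympols}) together with the analogue of~(\ref{inicioinduction}). You replace that induction with the single telescoping identity $\prod_{\ell<i}(1-x_\ell t)\cdot\prod_{\ell\leq i}(1-x_\ell t)^{-1}=(1-x_i t)^{-1}$, whose $t^{m+k-j}$-coefficient yields the closed formula $x_i^{m+k-j}=\sum_{s=0}^{i-1}e_s(x_1,\ldots,x_{i-1})\,h_{m+k-j-s}(x_1,\ldots,x_i)$; a single application of~(\ref{opgif}) then truncates the sum to $s\geq i-j$, and the $s=i-j$ summand visibly supplies the cancellation with the correction term $x_i^{m+k-i}e_{i-j}(x_1,\ldots,x_{i-1})$. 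What this buys is a uniform, induction-free argument in which the cancellation is transparent; the small extra obligation is the final reduction to the basis~(\ref{basisf1km}), which you handle correctly by noting that the trimming relations $h_{m+k-\ell+1}(x_1,\ldots,x_\ell)=0$ for $\ell<i$ involve only $x_1,\ldots,x_\ell$, hence never alter the $x_i$-exponent nor introduce a variable of index greater than $i$---a step the paper leaves equally implicit.
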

\begin{proof}
The cases $j=0$ and $j=i$ hold vacuously true in view of~(\ref{altuhigh}). The case $j=i-1$ follows by observing that a repeated use of~(\ref{descomponerinicio}) allows us to write the relations in~(\ref{mrf1km}) as
\begin{equation}\label{inicioinduction}
x_i^{m+k-i+1}=x_i^{m+k-i} h_1+x_i^{m+k-i-1} h_2+
\cdots+h_{m+k-i+1}
\end{equation}
where the complete symmetric polynomials are evaluated at the variables $x_1,\ldots, x_{i-1}$. All other cases ($0<j<i-1$) follow from an obvious (decreasing) inductive calculation using~(\ref{basrelbeteleandcomsympols}) and the corresponding analogue of~(\ref{inicioinduction}).
\end{proof}

\section{$\mathbb{F}_2$-zcl bounds for $\TC(F(1^k,m))$}\label{seczcl1k}
Most of the existing methods to estimate the topological complexity of a given space are cohomological in nature and are based on some form of obstruction theory. One of the most successful methods to estimate Farber's topological complexity is: 

\begin{proposition}\label{cotas}
Let $X$ have the homotopy type of an ($e-1$)-connected CW complex of dimension $d$. Then $\zcl_R(X)\leq\TC(X)\leq\frac{2d}{e}$.
\end{proposition}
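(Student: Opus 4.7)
The plan is to derive both inequalities as consequences of general Schwarz-genus principles applied to the evaluation fibration $e\colon P(X)\to X\times X$; neither bound is novel, so the work consists in packaging two classical results.

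For the lower bound, I would invoke Schwarz's cup-length inequality: for any fibration $p\colon E\to B$ and coefficient ring $R$, the (normalized) sectional category of $p$ is bounded below by the length of the longest nontrivial product in the ideal $\ker\bigl(p^*\colon H^*(B;R)\to H^*(E;R)\bigr)$. The key reduction is that the constant-path inclusion $c\colon X\to P(X)$ is a homotopy equivalence with $e\circ c=\Delta$, the diagonal map, so $e^*$ is identified with $\Delta^*\colon H^*(X\times X;R)\to H^*(X;R)$. The kernel of $\Delta^*$ is by definition the ideal of $R$-zero-divisors of $X$, whose cup-length is $\zcl_R(X)$; this yields $\TC(X)\geq\zcl_R(X)$.

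For the upper bound I plan to use the chain
\[
\TC(X)\;=\;\operatorname{secat}(e)\;\leq\;\cat(X\times X)\;\leq\;\frac{2d}{e},
\]
where the first inequality is the universal Schwarz-genus bound $\operatorname{secat}(p)\leq\cat(B)$ for any fibration over $B$, and the second is the classical connectivity-versus-dimension estimate: if $Y$ is $(e-1)$-connected with the homotopy type of a CW complex of dimension $D$, then $\cat(Y)\leq D/e$. Since $X$ is $(e-1)$-connected of dimension $d$, the product $X\times X$ is again $(e-1)$-connected and has (the homotopy type of a CW complex of) dimension at most $2d$, whence $\cat(X\times X)\leq 2d/e$.

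No single step stands out as the main obstacle; the only points requiring attention are the identification in the lower bound of $\ker(e^*)$ with the zero-divisor ideal (which rests on the observation that $e$ is homotopy equivalent, as a map, to $\Delta$) and the appeal to the Ganea-style proof of the dimension/connectivity bound for $\cat$. With these ingredients in hand the statement is immediate.
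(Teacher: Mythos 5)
Your proof is correct. The paper itself provides no proof, deferring to Theorems~4 and~7 of Farber's paper \cite{MR1957228}. Your lower-bound argument (Schwarz's cup-length bound for sectional category combined with the identification of $e^*$ with $\Delta^*$ via the constant-path homotopy equivalence $c\colon X\to P(X)$, $e\circ c=\Delta$) is precisely Farber's Theorem~7. Your upper bound factors through $\cat(X\times X)$ together with the classical bound $\cat(Y)\leq \dim(Y)/e$ for an $(e-1)$-connected $Y$; Farber's Theorem~4 instead invokes Schwarz's connectivity estimate for sectional category directly on the fibration $e\colon P(X)\to X\times X$, exploiting the $(e-2)$-connectivity of the fiber $\Omega X$ and the dimension of the base $X\times X$. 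The two routes produce the same numerical conclusion, and yours is arguably a bit more modular, since it separates the two standard ingredients $\mathrm{secat}(p)\leq\cat(B)$ and the dimension-connectivity bound for $\cat$. One small point to keep in mind is that $\mathrm{secat}(p)\leq\cat(B)$ relies on the fiber being nonempty (so that a nullhomotopic open set admits a homotopy section, which a Hurewicz fibration upgrades to an actual section); this is automatic here since the fiber is $\Omega X$.
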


For a proof see~\cite[Theorems~4 and~7]{MR1957228}. Here $R$ is a commutative ring with unit and, if $\Delta\colon X \hookrightarrow X\times X$ denotes the diagonal inclusion, then $\zcl_R(X)$ stands for the maximal number of elements in ker($\Delta^*\colon H^*(X\times X;R)\to H^*(X;R)$) having a non-trivial product\footnote{Twisted cohomology coefficients can be used, but the present setting suffices for our goals.}. We will only be concerned with $R=\mathbb{Z}_2$, and will omit reference of these coefficients while writing cohomology groups. Thus, $\Delta^*\colon H^*(X)\otimes H^*(X)\to H^*(X)$ is given by cup-multiplication, which explains the notation ``zcl" (zero-divisors cup-length) for elements in the kernel of $\Delta^*$.

We use the notation $\lambda_i$ (resp.~$\rho_i$) for the generators $x_i$ on the left (resp.~right) tensor factor of 
$H^*(F(1^k,m)\times F(1^k,m))=H^*(F(1^k,m))\otimes H^*(F(1^k,m))$. The sum $\lambda_i+\rho_i$, which is a zero-divisor, will be denoted by $z_i$.

\begin{lemma}\label{2alae}
In the ring $H^*(F(1^k,2^e))^{\otimes 2}$ we have
\begin{equation}\label{elnocero1}
(z_1\cdots z_k)^{2^{e+1}-1}\neq0.
\end{equation}
\end{lemma}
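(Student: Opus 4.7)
My plan is to exploit the binary pattern of the exponent and then multiply by a well-chosen class to reduce nonvanishing to nonvanishing of the Poincar\'e dual top class.

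\smallskip

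First, since $2^{e+1}-1$ has binary expansion $\underbrace{1\cdots 1}_{e+1}$, Lucas' theorem gives $\binom{2^{e+1}-1}{j}\equiv 1\pmod 2$ for every $0\leq j\leq 2^{e+1}-1$. Writing $m=2^{e}$, this immediately yields, in characteristic two,
\[
z_i^{2m-1}=\sum_{j=0}^{2m-1}\lambda_i^{\,j}\rho_i^{\,2m-1-j},
\qquad
(z_1\cdots z_k)^{2m-1}=\sum_{\vec a\in[0,2m-1]^k}\Bigl(\prod_i\lambda_i^{a_i}\Bigr)\otimes\Bigl(\prod_i\rho_i^{\,2m-1-a_i}\Bigr).
\]

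\smallskip

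Next, I would show that multiplying this sum by the auxiliary class
\[
\gamma:=1\otimes\prod_{i=1}^{k}\rho_i^{\,2k-2i+1},\qquad \deg\gamma=\sum_{i=1}^{k}(2k-2i+1)=k^{2},
\]
produces precisely the top class $\mu\otimes\mu$, where $\mu=x_1^{m+k-1}x_2^{m+k-2}\cdots x_k^{m}$ is nonzero by Corollary~\ref{lscatf1km}. Note that $\gamma$ itself is a basis monomial (using~(\ref{basisf1km})) whenever $m\geq k$, the case of interest. After multiplication by $\gamma$ the $\rho$-exponents become $m+k-i-d_i$ upon setting $a_i=m+k-i+d_i$. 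The admissibility conditions $0\leq a_i,\ m+k-i-d_i\leq m+k-1$ force $d_i\in[-(i-1),i-1]$, so in particular $d_1=0$; landing in top degree on each tensor factor moreover forces $\sum d_i=0$. The diagonal choice $\vec d=0$ manifestly gives $\lambda^a=\mu$ and $\rho^{m+k-i-d_i}=\mu$, hence contributes $\mu\otimes\mu$.

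\smallskip

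The heart of the argument is a \emph{cascade lemma} asserting that every surviving $\vec d\neq 0$ contributes zero. Concretely, I would argue: if some $d_j>0$, then $\prod_i\lambda_i^{a_i}=0$ in $H^*(F(1^k,m))$. Since the $d_i$'s sum to zero with $d_1=0$, any $\vec d\neq 0$ must have some $d_j>0$, so this suffices. To prove the cascade lemma, reduce the non-basis factor $\lambda_j^{m+k-j+d_j}$ using the relation $h_{m+k-j+1}(\lambda_1,\ldots,\lambda_j)=0$ from~(\ref{mrf1km}); each resulting summand contains some $\lambda_\ell^{s_\ell}$ with $\ell<j$ and $s_\ell\geq 1$. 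Multiplying by the prefactor $\lambda_1^{m+k-1}$ (forced by $d_1=0$) kills all terms with $s_1\geq 1$ via $\lambda_1^{m+k}=0$; the remaining terms ($s_1=0$, some $s_\ell>0$ for $2\leq\ell<j$) produce further non-basis powers $\lambda_\ell^{m+k-\ell+s_\ell}$, which upon iterated reduction inevitably propagate the problem back to $\lambda_1$ by~(\ref{relacionesextendidas}), and so vanish.

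\smallskip

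The main obstacle I anticipate is the bookkeeping in the cascade argument: one must verify that the iteration terminates and that none of the intermediate reductions accidentally produces $\mu$ with odd multiplicity. A clean way to encode this would be a graded lex-order argument (with $\lambda_1>\cdots>\lambda_k$) showing that each reduction strictly decreases a suitable monomial complexity while being constrained by $\lambda_1^{m+k}=0$; alternatively, Proposition~\ref{neglect} can be used to isolate the leading effect of each non-basis power $\lambda_j^{m+k-j+d_j}$ and verify the collision inductively on $j$. Once the cascade lemma is in place, the identity $(z_1\cdots z_k)^{2m-1}\cdot\gamma=\mu\otimes\mu$ follows, and hence so does~(\ref{elnocero1}).
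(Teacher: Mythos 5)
Your approach (expand $(z_1\cdots z_k)^{2^{e+1}-1}$ via Lucas, multiply by an auxiliary class $\gamma$ to land in top degree, and show that only the diagonal term survives) is genuinely different from the paper's proof, which proceeds by induction on $k$ using the Serre spectral sequence of the termwise squared fibration $F(1^k,2^e)\to F(1^{k+1},2^e)\to\mathbb{R}\mathrm{P}^{2^e+k}$ and the collapse result of Mimura--Toda. Your route is closer in spirit to the direct computation mentioned in the paper's Remark~\ref{tesisrem1}, but the specific reduction you propose has a real flaw.

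The cascade lemma is false as stated. Take $k=3$, $m=2^e\geq 2$, $\vec d=(0,-1,1)$, so $\vec a=(m+2,m,m+1)$. Reducing $\lambda_3^{m+1}$ via $h_{m+1}(\lambda_1,\lambda_2,\lambda_3)=0$ and multiplying by $\lambda_1^{m+2}\lambda_2^{m}$: all summands with $\lambda_1$-excess die by $\lambda_1^{m+3}=0$; summands with $\lambda_2$-excess $\geq 2$ die because $h_{m+2}(\lambda_1,\lambda_2)=0$ pushes a $\lambda_1$ factor out; but the summand $\lambda_1^0\lambda_2^1\lambda_3^{m}$ gives $\lambda_1^{m+2}\lambda_2^{m+1}\lambda_3^{m}=\mu\neq 0$. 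The problem is that the exponent after reduction is $m+k-\ell+d_\ell+s_\ell$, not $m+k-\ell+s_\ell$: a negative $d_\ell$ can absorb $s_\ell$ and yield a legal basis power, contrary to your claim that the cascade always propagates back to $\lambda_1$. So ``some $d_j>0\Rightarrow\prod_i\lambda_i^{a_i}=0$'' is not true. (In this example the $\rho$-factor $\rho_1^{m+2}\rho_2^{m+2}\rho_3^{m-1}$ vanishes instead, so the term still contributes $0$ to $\mu\otimes\mu$---the final conclusion happens to survive, but not for the reason you give.)

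Your overall strategy can be repaired (when $2^e\geq k$) by replacing the cascade argument with a parity/pairing argument: if $f(\vec d)$ denotes the coefficient of $\mu$ in $\prod_i x_i^{\,m+k-i+d_i}$ (and the $\rho$-exponent is $m+k-i-d_i$, so the $\rho$-coefficient is $f(-\vec d)$), the coefficient of $\mu\otimes\mu$ in $\gamma\cdot(z_1\cdots z_k)^{2m-1}$ is $\sum_{\sum d_i=0} f(\vec d)f(-\vec d)$. Once one checks that the excluded exponents $a_i>2m-1$ already make $f(\vec d)=0$ (this uses $2^e\geq k$ so that $2m\geq m+k$), the sum may be extended to all $\vec d$ with $\sum d_i=0$ and is symmetric under $\vec d\mapsto-\vec d$, leaving only the fixed point $\vec d=0$ with $f(0)^2=1$. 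Finally, note that your argument does not address the case $2^e<k$ (complete and near-complete flags, e.g.\ $F(1^4)$), which is part of the Lemma's scope: there $\gamma$ is not a basis monomial, the bound $a_i\leq 2m-1$ becomes genuinely restrictive, and the extension-of-the-sum step above fails. The paper's spectral-sequence induction covers those cases uniformly.
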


\begin{remark}\label{nowaved}{\em
When $k\leq2^e$,~(\ref{elnocero1}) is sharp in the sense that $z_j^{2^{e+1}}=\lambda_j^{2^{e+1}}+\rho_j^{2^{e+1}}=0$ for $1\le j\le k$, in view of~(\ref{altuhigh}). However, such an optimality in~(\ref{elnocero1}) is far from holding when $k>2^e$. For instance,~(\ref{elnocero1}) asserts that $z_1z_2z_3\neq0$ in $H^*(F(1^4))^{\otimes2}$, but we will show in fact (Proposition~\ref{2alae2alae} below) that $z_1^3z_2^3z_3^2\neq0$ in $H^*(F(1^4))^{\otimes2}$. A similar phenomenon holds for $F(1^3,2)$---replacing the use of Proposition~\ref{2alae2alae} by Theorem~\ref{prdtsf13m} below (with $k=3$).
}\end{remark}

\begin{proof}[Proof of Lemma~\ref{2alae}]
We proceed by induction on $k$. The case for $k=1$ is elementary and well known---note that $F(1,2^e)$ is the real projective space $\mathbb{R}\mathrm{P}^{2^e}$. Assume the result is valid for $k$ and consider the fibration
\begin{equation}\label{simplificaciondeDarwin}
F(1^k,2^e)\stackrel{\iota}\to F(1^{k+1},2^e)\stackrel{\pi}\to F(1,2^e+k)=\mathbb{R}\mathrm{P}^{2^e+k}
\end{equation}
where $\pi(L_1,\ldots,L_{k+1},V)=(L_1,V\oplus\bigoplus_{2\le i\le k+1}L_i)$. Since $\iota$ is surjective in cohomology,~\cite[Theorem~4.4]{MR1122592} shows that the Serre spectral sequence for the term-wise cartesian square of~(\ref{simplificaciondeDarwin}) has a trivial system of coefficients, and collapses from its second term. The result follows since, by the inductive hypothesis, the left-hand side term in~(\ref{elnocero1}) is non-zero in the second stage of the spectral sequence.
\end{proof}

\begin{remark}\label{tesisrem1}{\em
In the Ph.D.~thesis work of the third author,~(\ref{elnocero1}) was originally proven (for $k\leq2^e$) by checking, through direct calculation (with Corollary~\ref{kill1} playing a key role), that the basis element 
$$
\lambda_1^{2^e-k}\lambda_2^{2^e-k+1}\cdots\lambda_k^{2^e-1}\rho_k^{2^e}\rho_{k-1}^{2^e+1}\cdots\rho_1^{2^e+k-1}
$$
appears with coefficient 1 in the expression of $(z_1\cdots z_k)^{2^{e+1}-1}$. In turn, the latter fact was suggested by extensive computer calculations.%The technique is similar to that in the proof of Proposition~\ref{2alae2alae} below.
}\end{remark}

Proposition~\ref{cotas}, Corollary~\ref{lscatf1km}, and Lemma~\ref{2alae} yield the estimate in Corollary~\ref{zcllower2e} below for the topological complexity of manifolds $F(1^k,m)$ admitting an equatorial inclusion $F(1^k,2^e)\hookrightarrow F(1^k,m)$ with $2^e\leq m$.

 \begin{corollary}\label{zcllower2e}
Let $e$ denote the integral part of $\log_2(m)$. Then
\begin{equation}\label{cotasTC2e-generalizado}
k(2^{e+1}-1)\leq\TC(F(1^k,m))\leq k(2m+k-1).
\end{equation}
\end{corollary}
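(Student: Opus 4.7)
My plan is to combine the three ingredients cited immediately before the statement---Proposition~\ref{cotas}, Corollary~\ref{lscatf1km}, and Lemma~\ref{2alae}---with one small additional geometric observation about an equatorial embedding.

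For the upper bound, since $F(1^k, m)$ is a closed connected manifold, the connected-case form of Proposition~\ref{cotas} gives $\TC(F(1^k, m)) \leq 2\dim(F(1^k, m))$, and the dimension computation in Corollary~\ref{lscatf1km} evaluates the right-hand side to exactly $2\bigl(km + k(k-1)/2\bigr) = k(2m+k-1)$.

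The lower bound is where the real work lies. The plan is to exhibit an equatorial inclusion $j\colon F(1^k, 2^e)\hookrightarrow F(1^k, m)$, well-defined precisely because $2^e \leq m$: fix an orthogonal splitting $\mathbb{R}^{m+k} = \mathbb{R}^{2^e+k}\oplus\mathbb{R}^{m-2^e}$ and map $(L_1,\ldots,L_k,V)$ to $(L_1,\ldots,L_k,V\oplus\mathbb{R}^{m-2^e})$. The restriction of the $i$-th tautological line bundle on $F(1^k,m)$ along $j$ is tautologically the corresponding line bundle on $F(1^k,2^e)$, so naturality of $w_1$ forces $j^*(x_i) = x_i$ for $1 \leq i \leq k$. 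It follows that $(j\times j)^*$ carries each zero-divisor $z_i = \lambda_i + \rho_i$ in $H^*(F(1^k,m))^{\otimes 2}$ to its namesake in $H^*(F(1^k,2^e))^{\otimes 2}$. Since Lemma~\ref{2alae} asserts that $(z_1\cdots z_k)^{2^{e+1}-1}$ is non-zero in the target, it must also be non-zero in the source. This bounds $\zcl(F(1^k,m))$, and hence $\TC(F(1^k,m))$ via Proposition~\ref{cotas}, from below by $k(2^{e+1}-1)$.

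The only step that is not entirely formal is the identification of the tautological bundles under the equatorial inclusion, which is essentially a bookkeeping exercise with the orthogonal decomposition---straightforward but worth writing out carefully. Once that naturality input is recorded, all three ingredients line up with nothing further to prove.
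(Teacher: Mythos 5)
Your proof is correct and follows exactly the route the paper indicates in the sentence preceding the corollary: the upper bound comes from the dimensional bound in Proposition~\ref{cotas} together with the dimension count in Corollary~\ref{lscatf1km}, and the lower bound comes from pulling back the non-trivial zero-divisor product of Lemma~\ref{2alae} along the equatorial inclusion $F(1^k,2^e)\hookrightarrow F(1^k,m)$. You simply fill in the naturality-of-$w_1$ bookkeeping that the paper leaves implicit.
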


Note that the smallest gap in~(\ref{cotasTC2e-generalizado}) is of $k^2$ units (for $m=2^e$). On the other hand, the lower bound in~(\ref{cotasTC2e-generalizado}) is optimal in general. For instance, the gap of a unit for $\TC(F(1,2^e))$ coming from Corollary~\ref{zcllower2e} (with $k=1$) is resolved by~(\ref{comparacion1}). This is of course compatible with the first assertion in Remark~\ref{nowaved}. But for $k>1$ there is room for improvements of the lower bound in~(\ref{cotasTC2e-generalizado}) by a zero-divisors cup-length analysis of an intermediate space $F(1^k,m')$ with $2^e<m'\leq m$ and $m'$ not a power of 2. For instance, since~(\ref{altuhigh}) yields $z_j^{2^{e+1}}=0$ in $H^*(F(1^2,2^{e+1}-2))^{\otimes2}$, the only possibility to improve the lower bound in~(\ref{cotasTC2e-generalizado}) for $k=2$ via zero-divisors cup-length considerations can come only through the analysis of the case for $F(1^2,2^{e+1}-1)$. In fact, we next give zero-divisors cup-length bounds for  $\TC(F(1^k,m))$ inherent to the case $m=2^e-1$. As observed at the end of Remark~\ref{otrodesharpness}, our argument will apply only for $k\geq2$ ---after all, the case of $F(1,2^e-1)$, the real projective space of dimension $2^e-1$, has been one of the most difficult situations studied over the years~(see for instance~\cite{MR2443106,MR0144355}).

\begin{proposition}\label{2alae2alae}
Assume $k\geq2$ and $e\geq1$. In $H^*(F(1^k,2^e-1))^{\otimes2}$ we have
\begin{equation}\label{mascompli}
(z_1\cdots z_{k-1})^{2^{e+1}-1}z_k^{2^{e+1}-2}\neq0.
\end{equation}
\end{proposition}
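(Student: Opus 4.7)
The plan is to proceed by induction on $k\ge 2$. The base case is handled by a direct computation, while the inductive step uses the $\mathbb{F}_2$-Serre spectral sequence of a suitable fibration. In both parts the starting point is Lucas's theorem in characteristic~2: all $\binom{2^{e+1}-1}{j}$ with $0\le j\le 2^{e+1}-1$ are odd, and $\binom{2^{e+1}-2}{j}$ is odd iff $j$ is even, so
\[
z_i^{2^{e+1}-1}=\sum_{j=0}^{2^{e+1}-1}\lambda_i^j\rho_i^{2^{e+1}-1-j},\qquad z_k^{2^{e+1}-2}=\sum_{\ell=0}^{2^e-1}\lambda_k^{2\ell}\rho_k^{2(2^e-1-\ell)}.
\]
By~(\ref{altuhigh}), the height of each $x_i$ in $H^*(F(1^k,2^e-1))$ is $2^e+k-2$, so only terms with both $\lambda$- and $\rho$-exponents in $[0,2^e+k-2]$ can be non-zero.

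For the base case $k=2$ each generator has height $2^e$, so only four Lucas terms survive: $z_1^{2^{e+1}-1}=\lambda_1^{2^e-1}\rho_1^{2^e}+\lambda_1^{2^e}\rho_1^{2^e-1}$ and $z_2^{2^{e+1}-2}=\lambda_2^{2^e-2}\rho_2^{2^e}+\lambda_2^{2^e}\rho_2^{2^e-2}$. From~(\ref{mrf1km}) with $i=1$ one has $x_2^{2^e}=x_1^{2^e}+x_1^{2^e-1}x_2+\cdots+x_1x_2^{2^e-1}$; multiplying by $x_1^{2^e-1}$ or $x_1^{2^e}$ and using $x_1^{2^e+1}=0$ yields the key identities
\[
\lambda_1^{2^e-1}\lambda_2^{2^e}=\lambda_1^{2^e}\lambda_2^{2^e-1}\qquad\mbox{and}\qquad \lambda_1^{2^e}\lambda_2^{2^e}=0,
\]
and likewise for $\rho$. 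Substituting into the product of the two expansions, two of the four resulting terms vanish and the remaining two combine to
\[
z_1^{2^{e+1}-1}z_2^{2^{e+1}-2}=(x_1^{2^e}x_2^{2^e-1})\otimes(x_1^{2^e}x_2^{2^e-2})+(x_1^{2^e}x_2^{2^e-2})\otimes(x_1^{2^e}x_2^{2^e-1}),
\]
a sum of two distinct basis monomials from~(\ref{basisf1km})$^{\otimes 2}$, hence non-zero.

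For the inductive step $k\ge 3$ I would use the fibration
\[
F(1^{k-1},2^e-1)\stackrel{\iota}{\hookrightarrow} F(1^k,2^e-1)\stackrel{\pi}{\to}\mathbb{R}\mathrm{P}^{2^e+k-2}
\]
(the one appearing in the proof of Proposition~\ref{f1km}, obtained by projecting onto the first line). The map $\iota^*$ sends $x_{j+1}$ to the $j$-th generator of the fiber (and kills $x_1$), so it is surjective in cohomology; by~\cite[Theorem~4.4]{MR1122592} the $\mathbb{F}_2$-Serre spectral sequence of the Cartesian square collapses from $E_2$ with trivial coefficients. The factor $z_1^{2^{e+1}-1}$ is pulled back from $\mathbb{R}\mathrm{P}^{2^e+k-2}\times \mathbb{R}\mathrm{P}^{2^e+k-2}$ (where its Lucas expansion retains, e.g., $\lambda_1^{2^e-1}\rho_1^{2^e}$), and therefore sits in $E_\infty^{2^{e+1}-1,0}$ with non-zero leading term. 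The complementary factor $(z_2\cdots z_{k-1})^{2^{e+1}-1}z_k^{2^{e+1}-2}$ restricts under $\iota\times\iota$ to the analogous expression in $H^*(F(1^{k-1},2^e-1))^{\otimes 2}$ (after relabeling the fiber's generators), which is non-zero by the inductive hypothesis; hence this factor has non-zero leading term in $E_\infty^{0,*}$. By the tensor-algebra structure of $E_\infty$, the product of the two leading terms is non-zero in $E_\infty^{2^{e+1}-1,*}$, and so the original class is non-zero in $H^*(F(1^k,2^e-1))^{\otimes 2}$.

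The main obstacle is the base case $k=2$: the ``magic'' identities $\lambda_1^{2^e-1}\lambda_2^{2^e}=\lambda_1^{2^e}\lambda_2^{2^e-1}$ and $\lambda_1^{2^e}\lambda_2^{2^e}=0$ are not self-evident, and pinpointing that these (and only these) reductions are needed---so that only two of the four Lucas products survive, forming a pair of distinct basis monomials---is the real content. Once these reductions are verified the inductive step is routine, as the SSS collapse reduces the question to the non-vanishing of two already-established classes.
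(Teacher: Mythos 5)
Your proposal is correct and follows essentially the same route as the paper's own proof: reduction to the base case $k=2$ via the Serre spectral sequence of the fibration $F(1^{k-1},2^e-1)\to F(1^k,2^e-1)\to\mathbb{R}\mathrm{P}^{2^e+k-2}$ (the paper runs the induction as $k\to k+1$, you as $k-1\to k$, which is the same thing), plus a direct computation of $z_1^{2^{e+1}-1}z_2^{2^{e+1}-2}$ in $H^*(F(1,1,2^e-1))^{\otimes 2}$ using~(\ref{altuhigh}), Lucas, and the relation $h_{2^e}(x_1,x_2)=0$. The only cosmetic difference is that you package the $k=2$ computation around the two identities $\lambda_1^{2^e-1}\lambda_2^{2^e}=\lambda_1^{2^e}\lambda_2^{2^e-1}$ and $\lambda_1^{2^e}\lambda_2^{2^e}=0$ (the latter being Corollary~\ref{kill1}), whereas the paper substitutes $\mu_2^{2^e}=\mu_1^2A_\mu+\mu_1\mu_2^{2^e-1}$ directly into the Lucas-reduced product; the surviving pair of basis monomials is the same.
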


\begin{remark}\label{otrodesharpness}{\em 
When $k\le2^e+1$,~(\ref{mascompli}) is almost sharp in the sense that
$z_j^{2^{e+1}}=0$ for $1\le j\le k$, in view of~(\ref{altuhigh}). As illustrated in Section~\ref{completeflags}, such an optimality fails in general for $k>2^e+1$. Also worth noticing is that, in Remark~\ref{combcomplexities} below, we give evidence suggesting $(z_1\cdots z_k)^{2^{e+1}-1}=0$ in Propositon~\ref{2alae2alae}. On the other hand, note that~(\ref{mascompli}) certainly fails for $k=1$.
}\end{remark}

\begin{proof}[Proof of  Proposition~{\rm\ref{2alae2alae}}]
The inductive argument in the proof of Lemma~\ref{2alae}, now replacing~(\ref{simplificaciondeDarwin}) by the fibrations
\begin{equation}\label{diferenteperolomismo}
F(1^k,2^e-1)\to F(1^{k+1},2^e-1)\to F(1,2^e+k-1)=\mathbb{R}\mathrm{P}^{2^e+k-1},
\end{equation}
shows that the general case in Proposition~\ref{2alae2alae} follows inductively from the case $k=2$. On the other hand, for the latter case,~(\ref{altuhigh}) gives in $H^*(F(1,1,2^e-1))^{\otimes2}$
\begin{eqnarray*}
z_1^{2^{e+1}-1}z_2^{2^{e+1}-2}&=&(\lambda_1+\rho_1)^{2^{e+1}-1}(\lambda_2+\rho_2)^{2^{e+1}-2}\\
&=&\left(\lambda_1^{2^e}\rho_1^{2^e-1}+\lambda_1^{2^e-1}\rho_1^{2^e}\right)\left(\lambda_2^2+\rho_2^2\right)^{2^e-1}\\
&=&\left( \lambda_1^{2^e}\rho_1^{2^e-1}+\lambda_1^{2^e-1}\rho_1^{2^e} \right) \left( (\lambda_2^2)^{2^{e-1}}(\rho_2^2)^{2^{e-1}-1}+(\lambda_2^2)^{2^{e-1}-1}(\rho_2^2)^{2^{e-1}} \right)\\
&=&\left( \lambda_1^{2^e}\rho_1^{2^e-1}+\lambda_1^{2^e-1}\rho_1^{2^e} \right) \left( \lambda_2^{2^{e}}\rho_2^{2^{e}-2}+\lambda_2^{2^{e}-2}\rho_2^{2^e} \right).
\end{eqnarray*}
Further, if $\mu$ stands for either $\lambda$ or $\rho$, the relations~(\ref{mrf1km}) give $\mu_2^{2^e}=\mu_1^2 A_\mu+\mu_1\mu_2^{2^e-1}$. Thus
\begin{align*}
z_1^{2^{e+1}-1}&z_2^{2^{e+1}-2}\\
&=\left( \lambda_1^{2^e}\rho_1^{2^e-1}+\lambda_1^{2^e-1}\rho_1^{2^e} \right) \left( \lambda_2^{2^{e}}\rho_2^{2^{e}-2}+\lambda_2^{2^{e}-2}\rho_2^{2^e} \right)\\
&=\left( \lambda_1^{2^e}\rho_1^{2^e-1}+\lambda_1^{2^e-1}\rho_1^{2^e} \right) \left( \left( \lambda_1^2A_\lambda+\lambda_1\lambda_2^{2^e-1}\right)\rho_2^{2^{e}-2}+\lambda_2^{2^{e}-2}\left( \rho_1^2A_\rho+\rho_1\rho_2^{2^e-1}\right) \right)\\
&=\lambda_1^{2^e}\rho_1^{2^e-1}\cdot \lambda_2^{2^e-2}\rho_1\rho_2^{2^e-1}+\lambda_1^{2^e-1}\rho_1^{2^e}\cdot \lambda_1\lambda_2^{2^e-1}\rho_2^{2^e-2}\\
&=\lambda_1^{2^e}\lambda_2^{2^e-2}\rho_1^{2^e}\rho_2^{2^e-1}+\lambda_1^{2^e}\lambda_2^{2^e-1}\rho_1^{2^e}\rho_2^{2^e-2}.
\end{align*}
The result follows as the two monomials in the last expression are basis elements.
\end{proof}

\begin{remark}\label{tesisrem2}{\em
In the Ph.D.~thesis of the second author,~(\ref{mascompli}) was originally proven (for $k\leq2^e+1$) by checking, through direct calculation (with Corollary~\ref{kill1} and Proposition~\ref{handy} playing a key role), that the basis elements
$$
\beta_1=\left(\prod_{i=1}^{k-2}\lambda_i^{2^e-k+i}\right)\lambda_{k-1}^{2^e}\lambda_k^{2^e-2}\rho_k^{2^e-1}\left(\prod_{i=1}^{k-1}\rho_i^{2^e+k-i-1}\right)
$$
and
$$
\beta_2=\left(\prod_{i=1}^{k-2}\lambda_i^{2^e-k+i}\right)\lambda_{k-1}^{2^e}\lambda_k^{2^e-1}\rho_k^{2^e-2}\left(\prod_{i=1}^{k-1}\rho_i^{2^e+k-i-1}\right)
$$
appear with non-trivial coefficient in the expansion of the left-hand term in~(\ref{mascompli}). Once again, such a fact was suggested by extensive computer calculations.
}\end{remark}

\begin{remark}\label{combcomplexities}{\em
Before discussing the implications of Proposition~\ref{2alae2alae} to the topological complexity of flag manifolds, we make a brief pause to say a few words about the sharpness of Proposition~\ref{2alae2alae} when $2\leq k\leq2^e+1$ ---hypothesis that will be in force in this paragraph. Since $0=z_i^{2^{e+1}}\in H^*(F(1^k,2^e-1))^{\otimes 2}$ for all $i$, the triviality of any product $z_{i_1}\cdots z_{i_t}$ with $t\geq(2^{e+1}-1)k$ is equivalent to
\begin{equation}\label{eleanter}
\left(z_1\cdots z_k\right)^{2^{e+1}-1}=0.
\end{equation}
Proving~(\ref{eleanter}) presents a major challenge---not addressed in this work. Checking the validity of~(\ref{eleanter}) for $k=2$ is a simple matter in view of the last expression for $z_1^{2^{e+1}-1}z_2^{2^{e+1}-2}$ at the end of the proof of Proposition~\ref{2alae2alae}. We have checked the validity of~(\ref{eleanter}) for $k\in\{3,4\}$ with the help of a computer, but the task quickly becomes computationally prohibitive as the number of basis elements in the expression on the left-hand term of~(\ref{mascompli}) increases very fast as $k$ grows: 16 basis elements are need for $k=3$, while the number of required basis elements increases to $1128$ for $k=4$---the sum of which would have to vanish after multiplying by $z_k$, should~(\ref{eleanter}) be true.}
\end{remark}

 \begin{corollary}\label{zcllower2emenos1}
Let $e$ denote the integral part of $\log_2(m+1)$. If $e\geq1$ and $k\geq2$, then
\begin{equation}\label{cotasTC2emenos1-generalizado}
k(2^{e+1}-1)-1\leq\TC(F(1^k,m))\leq k(2m+k-1).
\end{equation}
\end{corollary}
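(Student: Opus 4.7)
The plan is to derive Corollary \ref{zcllower2emenos1} as a formal consequence of Proposition \ref{2alae2alae}, exactly paralleling the way Corollary \ref{zcllower2e} was obtained from Lemma \ref{2alae}. For the upper bound, I would apply Proposition \ref{cotas} with connectivity parameter $e=1$ (recall $F(1^k,m)$ is connected but not simply connected) and dimension $d=\dim(F(1^k,m))=km+k(k-1)/2$ coming from Corollary \ref{lscatf1km}, which yields $\TC(F(1^k,m))\leq 2d=k(2m+k-1)$.

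For the lower bound, I would rely on the inequality $\zcl_{\mathbb{F}_2}(F(1^k,m))\leq \TC(F(1^k,m))$ from Proposition \ref{cotas} and exhibit a nontrivial product of $k(2^{e+1}-1)-1$ zero-divisors in $H^*(F(1^k,m))^{\otimes 2}$. The hypothesis $e=\lfloor \log_2(m+1)\rfloor$ with $e\geq 1$ gives $2^e-1\leq m$, so the standard embedding $\mathbb{R}^{2^e-1+k}\hookrightarrow \mathbb{R}^{m+k}$ induces an equatorial inclusion $\iota\colon F(1^k,2^e-1)\hookrightarrow F(1^k,m)$. Since the generators $x_i$ of $H^*(F(1^k,m))$ in Proposition \ref{f1km} are the first Stiefel--Whitney classes of the tautological line bundles, they restrict to the corresponding generators of $H^*(F(1^k,2^e-1))$; in particular $\iota^*$ is surjective and the product map $(\iota\times\iota)^*=\iota^*\otimes\iota^*$ sends each zero-divisor $z_i=\lambda_i+\rho_i$ to the correspondingly named zero-divisor in $H^*(F(1^k,2^e-1))^{\otimes 2}$.

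By Proposition \ref{2alae2alae}, the product $(z_1\cdots z_{k-1})^{2^{e+1}-1}z_k^{2^{e+1}-2}$, having
\[(k-1)(2^{e+1}-1)+(2^{e+1}-2)=k(2^{e+1}-1)-1\]
zero-divisor factors, is nonzero in $H^*(F(1^k,2^e-1))^{\otimes 2}$. Since the analogous expression in $H^*(F(1^k,m))^{\otimes 2}$ maps to it under the ring homomorphism $(\iota\times\iota)^*$, this analogue must also be nonzero, which establishes the lower bound $\zcl_{\mathbb{F}_2}(F(1^k,m))\geq k(2^{e+1}-1)-1$.

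I do not anticipate any real obstacle here: once Proposition \ref{2alae2alae} is in hand, the argument is a straightforward transfer via naturality of the zero-divisors cup-length along a cohomology-surjective inclusion. The only point requiring care is verifying that the hypothesis $e\geq 1$ is what makes the equatorial inclusion $F(1^k,2^e-1)\hookrightarrow F(1^k,m)$ exist (since $2^e-1\geq 1$ is needed for the flag manifold on the left to be nonempty in the relevant sense), and that $k\geq 2$ is precisely the hypothesis under which Proposition \ref{2alae2alae} applies.
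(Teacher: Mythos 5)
Your proof is correct and matches the paper's (implicit) argument: the paper derives this corollary by exactly the same transfer of Proposition~\ref{2alae2alae} via the equatorial inclusion $F(1^k,2^e-1)\hookrightarrow F(1^k,m)$, paralleling how Corollary~\ref{zcllower2e} follows from Lemma~\ref{2alae}. One small inaccuracy in your final remark: the hypothesis $e\geq1$ is not needed to make $F(1^k,2^e-1)$ nonempty (indeed $F(1^k,0)=F(1^k)$ is a perfectly good complete flag manifold); rather, $e\geq1$ is required because Proposition~\ref{2alae2alae} itself is stated and proved only under that assumption.
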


\begin{remark}\label{atrayente}{\em
The smallest gap in~(\ref{cotasTC2emenos1-generalizado}) is of $k(k-2)+1$ units (for $m=2^e-1$). The case of $F(1^2,2^e-1)$ (e.g.~the closed parallelizable 3-manifold $F(1^3)$) is particularly appealing, as the corresponding gap is only of one unit. Also interesting to note is the obvious fibration $F(1,2^e-1)\to F(1,1,2^e-1)\to F(1,2^e)$ for which the TC of the base is well understood, the TC of the total space has been estimated in Corollary~\ref{zcllower2emenos1} with an error of at most a unit, and yet the search for the TC of the fiber (in the form of the immersion dimension of $\mathbb{R}\mathrm{P}^{2^e-1}$) has been one of the main driving forces shaping homotopy theory over the last 75 years.
}\end{remark}

\begin{example}\label{comportamientosimilar}{\em
It is elementary to see that, when $k=1$, Corollary~\ref{zcllower2e} captures all the $\TC$-information cohomologically available from Lemma~\ref{2alae}. Indeed, the relations $z_1^{2^{e+1}-1}\neq0= z_1^{2^{e+1}}$ holding in $H^*(F(1,2^e))^{\otimes2}$ clearly hold in any $H^*(F(1,m))^{\otimes2}$ with $2^e\le m<2^{e+1}$. Likewise, for $k=2$, Corollaries~\ref{zcllower2e} and~\ref{zcllower2emenos1} capture all the $\TC$-information available from Lemma~\ref{2alae} and Proposition~\ref{2alae2alae}. For, while $z_1^{2^{e+1}-1}z_2^{2^{e+1}-2}\neq0$ holds sharply in $H^*(F(1,1,2^e-1))^{\otimes2}$ (c.f.~Remark~\ref{combcomplexities}), $z_1^{2^{e+1}-1}z_2^{2^{e+1}-1}\neq0$ holds sharply in $H^*(F(1,1,m))^{\otimes2}$ for $2^e\leq m\leq2^{e+1}-2$ (c.f.~Remark~\ref{nowaved}).
}\end{example}

Example~\ref{comportamientosimilar} (and extensive computer calculations) seem to suggest that all the zcl-information for $F(1^k,m)$ is contained in the cases $m=2^e-\delta$ with $0\leq\delta<k$. The analysis of the corresponding zcl properties is the subject of the remainder of this section (see Theorem~\ref{prdtsf13m}).  

\medskip
Apply the inductive argument in the proofs of Lemma~\ref{2alae} and Proposition~\ref{2alae2alae}, this time with the fibration
$
F(1,2^e-2)\to F(1^k,2^e-2)\to F(1^{k-1},2^e-1).
$
For $k\geq3$ and $e\geq1$, Proposition~\ref{2alae2alae} gives $(z_1\cdots z_{k-2})^{2^{e+1}-1}z_{k-1}^{2^{e+1}-2}\neq0$ in $H^*(F(1^{k-1},2^e-1))^{\otimes2}$, whereas $z_1^{2^e-1}\neq0$ in $H^*(F(1,2^e-2))^{\otimes2}$ is a standard calculation if $e\geq2$. Thus
\begin{equation}\label{observaciondeenrique}
\left(z_1\cdots z_{k-2}\right)^{2^{e+1}-1}z_{k-1}^{2^{e+1}-2}z_k^{2^e-1}\neq0\mbox{\; in\;}H^*(F(1^k,2^e-2))^{\otimes2}\mbox{\;if\;}k\geq3\mbox{\;and\;} e\geq2.
\end{equation}
What is remarkable in~(\ref{observaciondeenrique}) is that, although this argument is really measuring the zero-divisors cup-lenght of some particular graded object associated to $H^*(F(1^k,2^e-2))^{\otimes2}$, extensive computer calculations suggest that no cohomological information has been missed.

\begin{remark}\label{1112e-2failure}{\em
At a first glance,~(\ref{comparacion1}) and Remark~\ref{atrayente} might suggest that the methods in this paper could lead to estimate the TC of $F(1^3,2^e-2)$ with an error of at most a unit. The error, however, increases exponentially with $e$ (see Example~\ref{properk3exe}). Yet, as shown in Section~\ref{sechigherTC} below, the corresponding $\TC_s$ estimates for $s\geq3$ will in fact be sharp. 
}\end{remark}

The argument leading to~(\ref{observaciondeenrique}) can be iterated with the fibrations
\begin{equation}\label{lasfibsiniciales}
F(1,2^e-\delta)\to F(1^k,2^e-\delta)\to F(1^{k-1},2^e-\delta+1)
\end{equation}
for $\delta\geq2$ (but note that the case $\delta=1$ fails to recover Proposition~\ref{2alae2alae}) to get the following generalizations of Lemma~\ref{2alae} and Proposition~\ref{2alae2alae}, and of Corollaries~\ref{zcllower2e} and~\ref{zcllower2emenos1}:

\begin{theorem}\label{prdtsf13m}
The following assertions hold in 
$H^*(F(1^k,m))^{\otimes2}\colon$
\begin{itemize}
\item[(a)] For $m+k\leq2^{e+1}$ and $1\leq i\leq k$, $z_i^{2^{e+1}}=0.$
\item[(b)] For $m=2^e-\delta$ with $k>\delta\geq0$ and $2^{e-1}\geq\delta$,
\begin{equation}\label{factoresgenerales}
\left(z_1\cdots z_{k-\delta}\right)^{2^{e+1}-1}z_{k-\delta+1}^{2^{e+1}-2}\left(z_{k-\delta+2}\cdots z_k\right)^{2^e-1}\neq0.
\end{equation}
\end{itemize}
\end{theorem}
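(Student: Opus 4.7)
The plan is as follows. Part (a) is immediate from the Frobenius identity $z_i^{2^{e+1}}=\lambda_i^{2^{e+1}}+\rho_i^{2^{e+1}}$, valid in $\mathbb{F}_2$-cohomology, combined with~(\ref{altuhigh}): since $m+k\le 2^{e+1}$, both summands vanish.

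For part (b), I would prove~(\ref{factoresgenerales}) by induction on $\delta$. The base cases $\delta=0$ and $\delta=1$ reduce to Lemma~\ref{2alae} and Proposition~\ref{2alae2alae}, respectively (with the convention that factors $z_j$ with $j>k$ are omitted, so that the product collapses to $(z_1\cdots z_k)^{2^{e+1}-1}$ when $\delta=0$ and to $(z_1\cdots z_{k-1})^{2^{e+1}-1}z_k^{2^{e+1}-2}$ when $\delta=1$). For the inductive step with $\delta\ge 2$, assume the result for the pair $(k-1,\delta-1)$ (which satisfies the hypotheses $k-1>\delta-1\ge 0$ and $2^{e-1}\ge\delta-1$), and consider the cartesian square of the fibration~(\ref{lasfibsiniciales}):
\[
F(1,2^e-\delta)^2\hookrightarrow F(1^k,2^e-\delta)^2\twoheadrightarrow F(1^{k-1},2^e-\delta+1)^2.
\]
The fiber inclusion is surjective in cohomology because $x_1,\dots,x_{k-1}$ are pulled back from the base while $x_k$ restricts to the generator of $H^*(\mathbb{R}\mathrm{P}^{2^e-\delta})$. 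Exactly as in the proof of Lemma~\ref{2alae}, the mod-2 Serre spectral sequence has trivial coefficients and collapses at $E_2$.

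Next I split the left-hand side of~(\ref{factoresgenerales}) as $A\cdot B$, where
\[
A=(z_1\cdots z_{k-\delta})^{2^{e+1}-1}\,z_{k-\delta+1}^{2^{e+1}-2}\,(z_{k-\delta+2}\cdots z_{k-1})^{2^e-1}\quad\text{and}\quad B=z_k^{2^e-1}.
\]
The class $A$ involves only $z_1,\dots,z_{k-1}$ and is therefore pulled back from the base, where it is nonzero by the inductive hypothesis. The class $B$ restricts to the cartesian square of the fiber, so it remains to verify that $z_k^{2^e-1}\ne 0$ in $H^*(\mathbb{R}\mathrm{P}^{2^e-\delta})^{\otimes 2}$. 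Since Lucas's theorem ensures every $\binom{2^e-1}{j}$ is odd,
\[
z_k^{2^e-1}=\sum_{j=0}^{2^e-1}\lambda_k^j\rho_k^{2^e-1-j},
\]
and the summand indexed by $j$ survives the restriction precisely when $\delta-1\le j\le 2^e-\delta$. The hypothesis $2^{e-1}\ge\delta$ guarantees this interval is nonempty, so $B\ne 0$.

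By multiplicativity of the collapsed spectral sequence, the product $A\cdot B$ corresponds to the (nonzero) tensor product of the two factors in $E_\infty=H^*(\mathrm{base})^{\otimes 2}\otimes H^*(\mathrm{fiber})^{\otimes 2}$, and hence is nonzero in $H^*(F(1^k,2^e-\delta))^{\otimes 2}$, completing the induction. The only nonroutine step is the fiber-class computation, which is precisely where the hypothesis $2^{e-1}\ge\delta$ intervenes; everything else is a bookkeeping exercise that matches the shift of indices $k\mapsto k-1$, $\delta\mapsto\delta-1$ on the base with the appearance of the extra $z_k^{2^e-1}$ coming from the fiber.
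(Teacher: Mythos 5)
Your proof is correct and follows essentially the same route the paper takes: both use the fibrations~(\ref{lasfibsiniciales}), the surjectivity of the fiber inclusion on mod-$2$ cohomology, and the resulting collapse at $E_2$ of the Serre spectral sequence (as in the proof of Lemma~\ref{2alae}), inducting on $\delta$ with Lemma~\ref{2alae} and Proposition~\ref{2alae2alae} as the base cases $\delta=0,1$. The paper simply says the argument producing~(\ref{observaciondeenrique}) can be iterated for $\delta\geq2$; your write-up makes the induction explicit and spells out the fiber-level computation that shows $z_k^{2^e-1}$ restricts nontrivially to $\mathbb{R}\mathrm{P}^{2^e-\delta}$ precisely when $\delta\leq 2^{e-1}$ — which is indeed the point where the hypothesis $2^{e-1}\geq\delta$ enters.
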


\begin{corollary}\label{tcf13m}
Let $k$ and $m$ be positive integers, $\delta\in\{0,1,\ldots,k-1\}$, and set $\epsilon=\min(\delta,1)$ and $\alpha(r)=\max(0,r)$. If a nonnegative integer $e$ satisfies $2\delta\leq2^e\leq m+\delta$, then
\begin{equation}\label{gapk3}
(k-\delta+\epsilon)(2^{e+1}-1)+\hspace{.2mm}\alpha\hspace{-.5mm}\left({(\delta-1)(2^e-1)}\right)-\epsilon\leq\TC(F(1^k,m))\leq k(2m+k-1).
\end{equation}
\end{corollary}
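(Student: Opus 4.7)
The plan is to read off both inequalities in (\ref{gapk3}) essentially directly from results already in hand. For the upper bound, $F(1^k,m)$ is path-connected and, by Corollary~\ref{lscatf1km}, has dimension $km + k(k-1)/2$; the dimension-connectivity estimate in Proposition~\ref{cotas} (with connectivity parameter $e=1$) then gives $\TC(F(1^k,m)) \leq 2\dim(F(1^k,m)) = k(2m+k-1)$.

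For the lower bound, the two hypotheses $2\delta \leq 2^e$ and $2^e \leq m+\delta$ are tailored to Theorem~\ref{prdtsf13m}(b) and to the existence of a natural equatorial inclusion, respectively. Indeed, $2^e \leq m+\delta$ is equivalent to $2^e - \delta \leq m$, so there is an inclusion $\iota \colon F(1^k, 2^e-\delta) \hookrightarrow F(1^k, m)$ under which each tautological line bundle restricts to its counterpart; hence $\iota^*$ sends each generator $x_i$ to $x_i$, and the induced ring map $(\iota \times \iota)^*$ sends each zero-divisor $z_i = \lambda_i + \rho_i$ on the larger space to its namesake on the smaller. Meanwhile $2\delta \leq 2^e$ is precisely the condition $2^{e-1} \geq \delta$ needed to apply Theorem~\ref{prdtsf13m}(b), whose conclusion is that the product
\[
(z_1 \cdots z_{k-\delta})^{2^{e+1}-1}\, z_{k-\delta+1}^{2^{e+1}-2}\, (z_{k-\delta+2} \cdots z_k)^{2^e-1}
\]
is non-zero in $H^*(F(1^k, 2^e-\delta))^{\otimes 2}$, with the convention that factors corresponding to empty ranges of indices are omitted (namely the middle factor when $\delta=0$, and the last factor when $\delta\leq 1$). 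Since $(\iota \times \iota)^*$ is a ring map sending each $z_i$ to $z_i$, the same product of $z_i$'s in $H^*(F(1^k,m))^{\otimes 2}$ must itself be non-zero.

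Counting factors yields $(k-\delta)(2^{e+1}-1) + \epsilon(2^{e+1}-2) + \alpha((\delta-1)(2^e-1))$, and a brief three-case check ($\delta=0$, $\delta=1$, $\delta\geq 2$) verifies that this quantity equals $(k-\delta+\epsilon)(2^{e+1}-1) + \alpha((\delta-1)(2^e-1)) - \epsilon$, as displayed in (\ref{gapk3}). Proposition~\ref{cotas} then delivers the claimed lower bound. No step presents a serious obstacle, since all of the cohomological heavy lifting was already performed in Theorem~\ref{prdtsf13m}; the corollary is essentially a combinatorial repackaging of that result into a uniform $\TC$-estimate, together with a tidy accounting of the degenerate cases through the auxiliary quantities $\epsilon$ and $\alpha$.
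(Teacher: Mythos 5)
Your proof is correct and matches the paper's intended argument. The paper states Corollary~\ref{tcf13m} without a displayed proof, but the surrounding text (see the passage introducing Corollary~\ref{zcllower2e}) makes clear that the intended route is precisely yours: the upper bound is the $2\dim$ bound from Proposition~\ref{cotas} together with the dimension count of Corollary~\ref{lscatf1km}, and the lower bound comes from pulling back the non-trivial zero-divisor product of Theorem~\ref{prdtsf13m}(b) along the equatorial inclusion $F(1^k,2^e-\delta)\hookrightarrow F(1^k,m)$ (which sends tautological bundles to tautological bundles, hence $z_i\mapsto z_i$), and then counting the factors; your three-case verification that the factor count simplifies to the displayed expression is accurate.
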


Due to the form of the exponents of the factors on the left-hand side of~(\ref{factoresgenerales}), the gap in~(\ref{gapk3}) becomes in general larger as the parameter $\delta$ increases. Still, as shown in the following examples, there are concrete situations where Corollary~\ref{tcf13m} yields better lower bounds for larger values of $\delta$.

\begin{examples}\label{properk3exe}{\em
Obtaining the sharpest information from Theorem~\ref{prdtsf13m} and Corollary~\ref{tcf13m} for a fixed flag manifold $F(1^k,m_0)$ usually requires choosing a suitable combination of parameters $(e,\delta)$ with $2^e-\delta\leq m_0$ (so that the non-triviality of a cohomology class in $F(1^k,m_0)$ can be obtained, via Theorem~\ref{prdtsf13m}, from the non-triviality of its restriction to $F(1^k,2^e-\delta)$). Take for instance the case of $F(1^3,2)$ where the conclusion of item (b) in Theorem~\ref{prdtsf13m} with $\delta=0$ is $(z_1z_2z_3)^3\neq0$, but the conclusion with $\delta=2$ is in fact $z_1^7z_2^6z_3^3\neq0$. Alternatively, the case $\delta=0$ in Corollary~\ref{tcf13m} implies $\TC(F(1^3,6))\geq21$. However the case $\delta=2$ yields the stronger estimate $\TC(F(1^3,6))\geq36$. In particular, the smallest gap in~(\ref{gapk3}) for $F(1^3,6)$ is of 6 units and corresponds to $\delta=2$. More generally, for $e\geq2$, the smallest gap in~(\ref{gapk3}) for $F(1^3,2^e-2)$ is of $2^e-2$ units. In particular $\TC(F(1^3,2))\in\{16,17,18\}$---a gap of only two units.
}\end{examples}

As indicated in Remarks~\ref{nowaved} and~\ref{otrodesharpness}, the lower bound in~(\ref{gapk3}) tends to get weaker as $k$ is larger than $2^e-\delta$. The extreme case holds (with $(e,\delta)\in\{(0,0),(1,1)\}$) for complete flag manifolds. The next brief section illustrates the sort of phenomena found for those manifolds. 

\section{Examples with complete flag manifolds}\label{completeflags}
It is well known that complete flag manifolds $F_k:=F(1^k)$ (with $k$ ones) are parallelizable. In the case of $F_3$, the optimality of our cohomological methods has been discussed in Remark~\ref{combcomplexities} (the corresponding optimality in the case of $F_2\cong S^1$ is elementary). Likewise, in the case of $F_4$, Theorem~\ref{tcf13mintro} asserts $8\leq\TC(F_4)\leq12$, while computer calculations show that the lower bound $8\leq\TC(F_4)$ is all we can extract from $\zcl_{\mathbb{Z}_2}$ considerations. The case of $F_5$ is particularly interesting: although a straight application of Theorem~\ref{tcf13mintro} only gives $11\leq\TC(F_5)\leq20$, our cohomological \emph{methods} give in fact
\begin{equation}\label{gapdedos}
18\leq\TC(F_5)\leq 20.
\end{equation}
Indeed, since $z_1^7z_2^6\neq0$ in $H^*(F(1^2,3))^{\otimes2}$ and $z_1^3z_2^2\neq0$  in $H^*(F(1^3))^{\otimes2}$, the spectral sequence argument with the fibration $F(1^3)\to F_5\to F(1^2,3)$ shows
\begin{equation}\label{evidentia}
0\neq z_1^7z_2^6z_3^3z_4^2\in H^*(F_5)^{\otimes2}.
\end{equation}
Furthermore, computer calculations show that the lower bound in~(\ref{gapdedos}) is all we can extract for $F_5$ from $\zcl_{\mathbb{Z}_2}$ arguments. Similar considerations show
\begin{equation}\label{gapdedosbis}
25\leq\TC(F_6)\leq30
\end{equation}
although brute force (and extensive) computer calculations give in fact $27\leq\TC(F_6)$.

%\medskip\red{No se si valiera la pena decir algo acerca de los c\'alculos con computadora para $F_7$. Tambi\'en quiz\'as se pueda decir algo \'optimo (en la secci\'on final) para la higher TC de banderas completas de la forma $F_{2^e+1}$, sobre todo si no se puede probar el caso general de la higher TC de las de la forma $F(1^k,2^e-k+1)$.}

\section{Higher topological complexity}\label{sechigherTC}
We have seen that the methods in this paper give almost-sharp estimates for the topological complexity of flag manifolds $F(1,2^e)$ and $F(1^2,2^e-1)$. This section's goal is to show that, if we care about higher topological complexity, the estimates become sharp and, above all, valid for other flag manifolds of the form $F(1^k,2^e-k+1)$. In general, our results show that, as $s$ increases, the cohomological method becomes better suited to estimate $\TC_s(F(1^k,m))$. This point will be made precise in Remark~\ref{aclaracion} and Corollary~\ref{monotonicidad} below.

\medskip
The $\TC_s$-analogue of Proposition~\ref{cotas}, as noted in~\cite[Proposition~3.4]{MR2593704}, is stated in Proposition~\ref{cotasseq} below, where $\zcl_{s,R}(X)$, the $s$-th zero-divisors cup-length of $X$, stands for the maximal number of elements in ker($\Delta_s^*\colon H^*(X^s;R)\to H^*(X;R)$) having a non-trivial product, and $\Delta_s\colon X \to X^s$ is the iterated diagonal.

\begin{proposition}\label{cotasseq}
Let $X$ have the homotopy type of an ($e-1$)-connected CW complex of dimension $d$. Then
\begin{equation}\label{generalgap}
\zcl_{s,R}(X)\leq\TC_s(X)\leq\frac{sd}{e}.
\end{equation}
\end{proposition}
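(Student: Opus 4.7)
The statement pairs a cohomological lower bound with a dimension/connectivity upper bound, both of which follow from the general Schwarz-genus machinery applied to the $s$-th evaluation map $e_s\colon P(X)\to X^s$. My plan is to treat each inequality separately, in close parallel with the $s=2$ proof of Proposition~\ref{cotas}.

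For the lower bound, I would first observe that $e_s$ is a Hurewicz fibration and that evaluation $\gamma\mapsto\gamma(0)$ is a homotopy equivalence $P(X)\simeq X$ with homotopy inverse the constant-path embedding $c\colon X\to P(X)$. The composition $e_s\circ c$ is literally the iterated diagonal $\Delta_s\colon X\to X^s$, so $\ker e_s^{*}$ coincides with $\ker\Delta_s^{*}$ on cohomology. Invoking the standard cup-length lower bound for Schwarz genus (see, e.g., \cite{MR2455573}), any product $u_1\cdots u_n\neq 0$ in $H^*(X^s;R)$ with each $u_i\in\ker\Delta_s^{*}$ forces $\mathrm{secat}(e_s)\geq n$; taking $n$ to be maximal yields $\TC_s(X)\geq\zcl_{s,R}(X)$.

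For the upper bound, I would first identify the fiber of $e_s$: the preimage of $(x_1,\ldots,x_s)\in X^s$ consists of paths in $X$ taking the prescribed values at the $s$ subdivision instants, hence is a product of $s-1$ endpoint-fixed path spaces and is homotopy equivalent to $(\Omega X)^{s-1}$. Since $X$ is $(e-1)$-connected, $\Omega X$ is $(e-2)$-connected and so is $(\Omega X)^{s-1}$. Schwarz's obstruction-theoretic bound for the genus of a fibration---computed via the connectivity of iterated fiber joins---then yields $\TC_s(X)\leq sd/e$: the standard count is that an $n$-fold fiber join of an $(e-2)$-connected fiber is $(ne-2)$-connected, so the associated fiber-join fibration over the $sd$-dimensional base $X^s$ admits a section as soon as $n\geq\lceil(sd+1)/e\rceil$.

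Both arguments are essentially formal, and I do not anticipate any genuine conceptual obstacle; the result is already noted in \cite[Proposition~3.4]{MR2593704}. The most delicate step is the upper bound: one must correctly compute the connectivity of the iterated fiber join of $(\Omega X)^{s-1}$ and extract the integer ceiling that delivers the clean inequality $sd/e$. In the lower bound, the only nontrivial moment is the verification, via the homotopy-equivalence argument above, that $\ker e_s^{*}$ really equals $\ker\Delta_s^{*}$ rather than merely containing it.
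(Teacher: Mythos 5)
Your proof is correct and follows the standard route: the paper itself does not argue Proposition~\ref{cotasseq} but simply cites \cite[Proposition~3.4]{MR2593704}, and both halves of your argument reproduce the Schwarz-genus machinery behind that reference (identification of $\ker e_s^*$ with $\ker\Delta_s^*$ via the constant-path section for the lower bound, and the connectivity count on iterated fiber joins of $(\Omega X)^{s-1}$ for the upper bound). One point to make explicit: your fiber-join count bounds the \emph{unreduced} Schwarz genus of $e_s$ by $\lceil(sd+1)/e\rceil$, whereas the paper's $\TC_s$ is the reduced genus; passing to the reduced normalization gives $\TC_s(X)\leq\lceil(sd+1)/e\rceil-1=\lfloor sd/e\rfloor\leq sd/e$, which is the stated inequality, but as written your lower bound (``$\mathrm{secat}(e_s)\geq n$'') already uses the reduced convention while your upper bound is phrased in the unreduced one, so the two halves should be reconciled.
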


\begin{remark}\label{aclaracion}{\em
Let $G(k,m,s)$ denote the gap in~(\ref{generalgap}) for $R=\mathbb{Z}_2$ and $X=F(1^k,m)$, namely $G(k,m,s)=sd_{k,m}-\zcl_{s,\mathbb{Z}_2}(F(1^k,m))$ where $d_{k,m}=km+k(k-1)/2$ (c.f.~Corollary~\ref{lscatf1km}). Critically small values of $G(k,m,2)$ have been carefully pointed out in Section~\ref{seczcl1k}, in Examples~\ref{properk3exe}, as well as in~(\ref{gapdedos}) and~(\ref{gapdedosbis}). Corollary~\ref{monotonicidad} below indicates that, for $k$ and $m$ fixed, the sequence of non-negative integers $\{G(k,m,s)\}_{s\geq2}$ is monotonically decreasing---therefore eventually constant. In fact, in the main result of this section (Theorem~\ref{familiasbis} below), the monotonic phenomenon holds with a zero limiting value, $\lim_{s\mapsto \infty}G(k,m,s)=0$, thus getting sharp results.
}\end{remark}

\begin{theorem}\label{familiasbis}
For positive integers $e$, $k$ and $s$ with $e\geq1+\lfloor\frac{k-1}{2}\rfloor$ %\red{, $k\leq4$,} 
and $k\leq3\leq s$, $\zcl_{s,\mathbb{Z}_2}(F(1^k,2^e-k+1))=\TC_s(F(1^k,2^e-k+1))=s\dim(F(1^k,2^e-k+1))$.
\end{theorem}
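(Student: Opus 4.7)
The upper bound $\TC_s(F(1^k,2^e-k+1))\leq s\dim(F(1^k,2^e-k+1))$ is immediate from Proposition~\ref{cotasseq} applied with the connectivity parameter $e=1$. The plan is therefore to establish the matching lower bound $\zcl_{s,\mathbb{Z}_2}(F(1^k,2^e-k+1))\geq s\dim(F(1^k,2^e-k+1))$ by exhibiting, for each of the three cases $k\in\{1,2,3\}$, an explicit product of mod-2 zero-divisors in $H^*(F(1^k,2^e-k+1)^s)$ of cup-length $s\dim$ that evaluates to the top class.

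For the base case $k=1$, with $X=\mathbb{R}\mathrm{P}^{2^e}$ and $H^*(X^s)=\mathbb{Z}_2[x_1,\ldots,x_s]/(x_i^{2^e+1})$, set $z_j:=x_j+x_1$. Lucas' theorem gives $(z_j)^{2^e}=x_1^{2^e}+x_j^{2^e}$ and $(z_j)^{3\cdot 2^{e-1}}=x_1^{2^{e-1}}x_j^{2^e}+x_1^{2^e}x_j^{2^{e-1}}$; a direct expansion then shows
\[
(z_2)^{3\cdot 2^{e-1}}(z_3)^{3\cdot 2^{e-1}}\prod_{j=4}^{s}(z_j)^{2^e}=x_1^{2^e}x_2^{2^e}\cdots x_s^{2^e},
\]
the top class. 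Indeed, the partial product $(z_2)^{3\cdot 2^{e-1}}(z_3)^{3\cdot 2^{e-1}}$ contributes the single surviving monomial $x_1^{2^e}x_2^{2^e}x_3^{2^e}$ (the other three expansion terms all carry an $x_1$-power exceeding $2^e$), and each further factor $(z_j)^{2^e}$ then annihilates the pre-existing $x_1^{2^e}$ while contributing the new $x_j^{2^e}$. The cup-length is $2\cdot 3\cdot 2^{e-1}+(s-3)\cdot 2^e=s\cdot 2^e=s\dim$, as required.

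For $k\in\{2,3\}$, the strategy is to combine zero-divisors involving the different generators $x_1,\ldots,x_k$ over the $s-1$ position pairs available for a higher-$s$ product. Using the minimal relations $h_{m+i}(x_1,\ldots,x_{k+1-i})=0$ of Proposition~\ref{f1km}, together with their consequences in Propositions~\ref{handy} and~\ref{neglect}, to reduce out-of-basis monomials, one identifies ZD products of the shape $\prod(x_i^{(\ell)}+x_i^{(\ell')})^{a_{i,\ell,\ell'}}$ whose expansion collapses to the top class of $H^*(F(1^k,2^e-k+1)^s)$. A first pair $(\ell,\ell')=(1,2)$ should reproduce a factor-product of cup-length close to the $s=2$ bound of Theorem~\ref{prdtsf13m}(b), and the additional factor-pairs are meant to supply the remaining cup-length to reach the full $s\dim$; the mixed relation $h_{m+1}(x_1,\ldots,x_k)=0$ provides the crucial extra units of cup-length missing from the $s=2$ bound.

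The main obstacle is verifying that the mixed products do not vanish prematurely. For $F(1,1,1)$ and $s=3$, a direct calculation confirms that
\[
(x_1^{(1)}+x_1^{(2)})^3(x_2^{(1)}+x_2^{(2)})^2(x_2^{(1)}+x_2^{(3)})^2(x_1^{(2)}+x_1^{(3)})(x_2^{(2)}+x_2^{(3)})
\]
equals, after repeated use of the relation $x_2^2=x_1^2+x_1 x_2$ to reduce out-of-basis terms, the top class $x_1^{(1),2}x_2^{(1)}x_1^{(2),2}x_2^{(2)}x_1^{(3),2}x_2^{(3)}$, giving $\zcl_3(F(1,1,1))\geq 9=3\dim$. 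The analogous identifications for general $s$ and for $k=3$ should follow by iterating this construction over additional position pairs. The arithmetic hypothesis $e\geq 1+\lfloor(k-1)/2\rfloor$ is precisely what keeps the relevant monomials in the admissible basis range of Proposition~\ref{f1km}, and---as in Remarks~\ref{tesisrem1} and~\ref{tesisrem2}---the combinatorial bookkeeping is expected to benefit from computer support.
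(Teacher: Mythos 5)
The overall architecture of your proposal matches the paper's: the upper bound comes from Proposition~\ref{cotasseq} (with the space being $0$-connected, so the denominator is $1$), and the lower bound is to be obtained by exhibiting a non-trivial product of $s$-th zero-divisors whose total degree equals $s\dim$. Your $k=1$ computation is correct and is a genuine alternative to the paper's witness: you use the symmetric exponent pair $(3\cdot 2^{e-1},\,3\cdot 2^{e-1})$ on the distinguished coordinates $2$ and $3$, whereas the paper's Proposition~\ref{prodsnocerok12}(\ref{paraadelante}) uses the asymmetric pair $(2^{e+1}-1,\,2^e+1)$. Both work for the same degree-counting reason, and your version is arguably cleaner. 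Your $F(1,1,1)$, $s=3$ computation also checks out (I verified it), and it shows a second structural departure: you allow ``off-anchor'' zero-divisors such as $x_1^{(2)}+x_1^{(3)}$, which are not of the form $x_{1,j}+x_{i,j}$ but still lie in $\ker\Delta_s^*$. This is perfectly legitimate for bounding $\zcl_s$, and Lemma~\ref{stabilizacion} still lets you bootstrap from $s=3$ to general $s$ since that lemma multiplies in anchored factors on a fresh coordinate.

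The genuine gap is that the heart of the theorem---the $k=2$ case for $e\geq 2$ and the entire $k=3$ case---is not actually established. Everything you say about these cases is aspirational (``one identifies,'' ``should reproduce,'' ``should follow by iterating,'' ``expected to benefit from computer support''), and the only concrete instance you compute is $F(1,1,1)$, i.e.~$k=2$ with the smallest admissible $e=1$. The paper closes this gap by writing down explicit zero-divisor products for all $e$ in Propositions~\ref{prodsnocerok12}(2) and~\ref{prodsnocerok3}, namely
\[
\bigl(z_{2,1}^{2^{e+1}-1} z_{2,2}^{2^{e+1}-2}\bigr)\bigl(z_{3,1}^{2^e-1} z_{3,2}^{2^e+1}\bigr)\prod_{i\geq4}\bigl(z_{i,1}^{2^e}z_{i,2}^{2^e-1}\bigr)
\quad\text{and}\quad
\bigl(z_{2,1}^{2^{e+1}-1}z_{2,2}^{2^{e+1}-2}z_{2,3}^{2^e-1}\bigr)\bigl(z_{3,1}^{2^e-1}z_{3,2}^{2^e-1}z_{3,3}^{2^{e+1}-3}\bigr)\prod_{i\geq4}\bigl(z_{i,1}^{2^e}z_{i,2}^{2^e-1}z_{i,3}^{2^e-2}\bigr),
\]
and verifies their non-triviality by a careful hand computation (in the $s=3$ case, after which Lemma~\ref{stabilizacion} extends to all $s$). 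The key technical devices are: (i) working in the top degree, so that only the single basis element~(\ref{onlybasiselement}) matters, and the ``$\equiv$'' reduction discards any term that cannot reach it; and (ii) the closed-form relations~(\ref{noextendedreeplazo}) from Proposition~\ref{neglect}, which replace high powers $x_i^{m+k-j}$ by $x_i^{m+k-i}e_{i-j}(x_1,\ldots,x_{i-1})$ in a degree-controlled way. Your strategy is plausibly sound, but without concrete exponents and a completed verification it does not constitute a proof of the theorem; to make it one you would need to produce a closed-form family of exponent sequences depending on $e$ (as the paper does) and carry out the corresponding reduction to the top basis element.
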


Theorem~\ref{familiasbis} is an immediate consequence of Proposition~\ref{cotasseq} and the inequality $s\dim(F(1^k,2^e-k+1))\leq\zcl_{s,\mathbb{Z}_2}(F(1^k,2^e-k+1))$, which will be established in Propositions~\ref{prodsnocerok12} and~\ref{prodsnocerok3} for $k\leq3\leq s$ by identifying non-trivial products with suitably many $s$-th zero-divisors as factors. 

\smallskip
For $1\leq i\leq s$ and $1\leq j\leq k$, let $x_{i,j}$ be the pullback class $\pi_i^*(x_j)\in H^*(F(1^k,m))^{\otimes s}$ where $\pi_i\colon F(1^k,m)^s\to F(1^k,m)$ is the $i$-th projection ($1\leq i\leq s$ and $1\leq j\leq k$), and let $z_{i,j}$ stand for the $s$-th zero-divisor $x_{1,j}+x_{i,j}$. We will deal with the basis~(\ref{basisf1km}) and its tensor product  basis
\begin{equation}\label{basetensorial}
\prod_{i=1}^sx_i(n_{i,1},\ldots n_{i,k}), \quad 0\leq n_{i,j}\leq m+k-j,
\end{equation}
where $x_i(n_{i,1},\ldots n_{i,k})=\pi_i^*(x(n_{i,1},\ldots n_{i,k}))$.

\begin{proposition}\label{prodsnocerok12}
For $s\geq3$ and $e\geq1$, 
\begin{enumerate}
\item \label{paraadelante}$0\neq z_{2,1}^{2^{e+1}-1}\cdot z_{3,1}^{2^e+1}\cdot z_{4,1}^{2^e}\cdots z_{s,1}^{2^e}\in H^*(F(1,2^e))^{\otimes s}$.
\item $0\neq (z_{2,1}^{2^{e+1}-1} z_{2,2}^{2^{e+1}-2})\cdot (z_{3,1}^{2^e-1} z_{3,2}^{2^e+1})\cdot(z_{4,1}^{2^e}z_{4,2}^{2^e-1})\cdots(z_{s,1}^{2^e}z_{s,2}^{2^e-1})\in H^*(F(1^2,2^e-1))^{\otimes s}$.
\end{enumerate}
\end{proposition}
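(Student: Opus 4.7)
The plan is to prove both non-vanishing statements by expanding the displayed products via the Frobenius identity $(u+v)^{2^n}=u^{2^n}+v^{2^n}$ (valid in characteristic two) together with the relations of Proposition~\ref{f1km}, and then identifying a specific top-dimensional basis element in $H^*(F(1^k,m))^{\otimes s}$ whose coefficient in the expansion equals one. A convenient structural point used throughout is that the top-dimensional cohomology of $F(1^k,m)$ is one-dimensional, spanned by $x_1^{m+k-1}\cdots x_k^m$; so once the announced product has total degree $s\cdot\dim(F(1^k,m))$, its non-triviality is equivalent to the coefficient of $\prod_{i=1}^s x_{i,1}^{m+k-1}\cdots x_{i,k}^m$ being one.

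For part (1), I would set $a=x_{1,1}$ and $b_i=x_{i,1}$ for $i\geq 2$, and use $a^{2^e+1}=0=b_i^{2^e+1}$ from~(\ref{altuhigh}). Frobenius gives $(a+b_i)^{2^e}=a^{2^e}+b_i^{2^e}$ for $i\geq 4$ and $(a+b_3)^{2^e+1}=a^{2^e}b_3+a\,b_3^{2^e}$. Writing $2^{e+1}-1=2^e+(2^e-1)$, applying Frobenius to the first half and Lucas's theorem to the second, then discarding the terms whose exponents exceed $2^e$, one obtains $(a+b_2)^{2^{e+1}-1}=a^{2^e}b_2^{2^e-1}+a^{2^e-1}b_2^{2^e}$. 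Multiplying these three types of factors and using $a^{2^{e+1}-1}=0$ (valid since $e\geq 1$), all cross-terms involving some exponent exceeding $2^e$ vanish, and only the monomial $a^{2^e}\prod_{i=2}^s b_i^{2^e}$ survives, which is the top basis element of $H^*(\mathbb{R}\mathrm{P}^{2^e})^{\otimes s}$.

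For part (2), write $a_j=x_{1,j}$ and $b_{i,j}=x_{i,j}$ for $i\geq 2$, and denote by $F_2$, $F_3$, $F_i$ (with $i\geq 4$) the three types of factors in the product. The pivotal remark is that $b_{i,\cdot}$ appears only in $F_i$, so the slot-$i$ degree for $i\geq 2$ comes entirely from $F_i$ and must reach $2^{e+1}-1$ after reduction. For $F_2$ I would invoke the computation in the proof of Proposition~\ref{2alae2alae} (under the correspondence $\lambda\leftrightarrow x_{1,\cdot}$, $\rho\leftrightarrow x_{2,\cdot}$) and retain only the summand $a_1^{2^e}a_2^{2^e-2}b_{2,1}^{2^e}b_{2,2}^{2^e-1}$. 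For $F_3$, the vanishing $x_{j,2}^{2^e+1}=0$ simplifies $z_{3,2}^{2^e+1}$ to $a_2 b_{3,2}^{2^e}+a_2^{2^e}b_{3,2}$, and combined with $z_{3,1}^{2^e-1}=\sum_j a_1^j b_{3,1}^{2^e-1-j}$ the unique top-degree contribution to slot~$3$ is $a_2\cdot b_{3,1}^{2^e-1}b_{3,2}^{2^e}$; rewriting $h_{2^e}(x_1,x_2)=0$ as $x_2^{2^e}=x_1 x_2^{2^e-1}+x_1^2 A$ and using $b_{3,1}^{2^e+1}=0$ reduces it to $a_2\cdot b_{3,1}^{2^e}b_{3,2}^{2^e-1}$. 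For each $F_i$ with $i\geq 4$, Frobenius yields $z_{i,1}^{2^e}=a_1^{2^e}+b_{i,1}^{2^e}$, and with the Lucas expansion of $z_{i,2}^{2^e-1}$ the unique top-degree survivor in slot~$i$ is $b_{i,1}^{2^e}b_{i,2}^{2^e-1}$, carrying no $a$-factor. Multiplying these surviving pieces, the accumulated slot-$1$ contribution is $a_1^{2^e}a_2^{2^e-2}\cdot a_2=a_1^{2^e}a_2^{2^e-1}$, the top basis element of slot~$1$, so $\prod_{i=1}^s x_{i,1}^{2^e}x_{i,2}^{2^e-1}$ appears with coefficient one.

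The main obstacle will be the bookkeeping that supports the uniqueness claims in part~(2): one must verify that no combination of sub-top monomials in the various $F_i$, after further applications of the relations, contributes additional terms to $\prod_{i=1}^s x_{i,1}^{2^e}x_{i,2}^{2^e-1}$. Exploiting the one-dimensionality of the top cohomology of $F(1^2,2^e-1)$ in each tensor slot together with the segregation of the $b_{i,\cdot}$ across slots should reduce this verification to the per-slot analyses sketched above.
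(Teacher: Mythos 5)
Your argument is correct and follows essentially the same approach the paper spells out for Proposition~\ref{prodsnocerok3} (and for which it declares the present case to be ``similar and easier''): expand each factor in the tensor basis, discard terms that cannot reduce to the unique top-dimensional basis element $\prod_{i=1}^{s}x_i(m+k-1,\ldots,m)$ of~(\ref{onlybasiselement}), and check that this element's coefficient is one. Your slot-segregation observation---that for $i\geq2$ the classes $x_{i,j}$ appear only in the $i$-th factor, so matching the top basis element in slot $i$ pins down a unique term of that factor, whose slot-$1$ content is then multiplied across factors---replaces the paper's use of Lemma~\ref{stabilizacion} (reduction to $s=3$) and already supplies the bookkeeping you flag as the ``main obstacle,'' so the argument is complete.
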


\begin{proposition}\label{prodsnocerok3}
For $s\geq3$ and $e\geq2$, $$0\neq(z_{2,1}^{2^{e+1}-1}z_{2,2}^{2^{e+1}-2}z_{2,3}^{2^e-1})
\cdot(z_{3,1}^{2^e-1}z_{3,2}^{2^e-1}z_{3,3}^{2^{e+1}-3})\cdot\prod_{i=4}^{s}(z_{i,1}^{2^e}z_{i,2}^{2^e-1}z_{i,3}^{2^e-2})\in H^*(F(1^3,2^e-2))^{\otimes s}.$$
\end{proposition}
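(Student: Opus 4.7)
The plan is to exploit the fact that the product in question has total cohomological degree $3s(2^e-1)$, which coincides with $s\cdot\dim(F(1^3,2^e-2))$, i.e.\ the top degree of the $s$-fold tensor product. Since $H^{\mathrm{top}}(F(1^3,2^e-2)^s;\mathbb{F}_2)$ is one-dimensional over $\mathbb{F}_2$, spanned by $\omega := \prod_{i=1}^s \omega_i$ where $\omega_i := x_{i,1}^{2^e} x_{i,2}^{2^e-1} x_{i,3}^{2^e-2}$ is the top class of slot $i$ (by Corollary~\ref{lscatf1km} and the basis~(\ref{basisf1km})), it suffices to show that the indicated product equals $\omega$.

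I would proceed by induction on $s\geq 3$. For the inductive step $s-1\to s$ with $s\geq 4$, write the slot-$s$ factor as
$$F_s := z_{s,1}^{2^e}z_{s,2}^{2^e-1}z_{s,3}^{2^e-2} = \omega_s + R_s,$$
where $\omega_s$ is the ``pure slot-$s$'' term obtained by selecting $x_{s,j}$ in each expansion $z_{s,j} = x_{1,j}+x_{s,j}$, and $R_s$ collects the remaining monomials, each of which has strictly positive slot-1 degree. Assuming inductively that $P_{s-1} = \omega_1\omega_2\cdots\omega_{s-1}$ after pulling back to the $s$-fold tensor product, the slot-1 component of $P_{s-1}$ is the top class $\omega_1$; by Corollary~\ref{kill1}, $\omega_1$ annihilates every positive-degree element of $H^*(F(1^3,2^e-2))$. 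Hence $P_{s-1}\cdot R_s=0$, so $P_s = P_{s-1}\cdot F_s = P_{s-1}\cdot\omega_s = \omega$.

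The base case $s=3$ is the main obstacle. Here one must directly verify $F_2 \cdot F_3 = \omega$ in $H^*(F(1^3,2^e-2))^{\otimes 3}$, where $F_2 = z_{2,1}^{2^{e+1}-1} z_{2,2}^{2^{e+1}-2} z_{2,3}^{2^e-1}$ and $F_3 = z_{3,1}^{2^e-1} z_{3,2}^{2^e-1} z_{3,3}^{2^{e+1}-3}$. The proposed strategy is to expand each $z_{i,j} = x_{1,j}+x_{i,j}$ via the characteristic-2 binomial theorem---exploiting the Frobenius identity $(a+b)^{2^e}=a^{2^e}+b^{2^e}$ and Lucas's theorem to determine which summands survive modulo 2---and then reduce the resulting monomials to the tensor-product basis~(\ref{basetensorial}) using the relations~(\ref{mrf1km}) via Proposition~\ref{neglect}. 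The coefficient of $\omega$ needs to be traced through this reduction and shown equal to $1$ mod $2$.

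The combinatorial bookkeeping for $s=3$ is delicate because neither $F_2$ nor $F_3$ contains a pure top-class term in its ``own'' slot (slot 2 or slot 3): for instance, the pure slot-3 monomial in $F_3$ is $x_{3,1}^{2^e-1}x_{3,2}^{2^e-1}x_{3,3}^{2^{e+1}-3}$, which already has $x_{3,3}^{2^{e+1}-3}=0$ when $e\geq 2$ by~(\ref{altuhigh}). Consequently every contribution to $\omega$ must mix slot-1 variables with those of slots 2 and 3 and then be redistributed through the defining relations. Following the approach that closes the proof of Proposition~\ref{2alae2alae}, I would aim to isolate one or two specific basis tensors that carry all of the non-trivial contribution to $F_2\cdot F_3$---with the identification of such elements guided, as in Remarks~\ref{tesisrem1} and~\ref{tesisrem2}, by extensive computer calculations---and verify that the resulting coefficient of $\omega$ is $1$ mod $2$.
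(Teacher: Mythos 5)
Your stabilization step (reduction to $s=3$) is sound and mirrors the paper's own reduction via Lemma~\ref{stabilizacion} and Corollary~\ref{monotonicidad}: the decomposition $F_s=\omega_s+R_s$, with every monomial of $R_s$ carrying positive slot-$1$ degree, together with Corollary~\ref{kill1} (which makes $\omega_1$ annihilate all of $H^{>0}$), correctly gives $P_{s-1}\cdot F_s=\omega_1\cdots\omega_s$. The degrees match and the inductive argument is valid, though slightly narrower than the paper's version of the same observation, which shows more generally that $z\cdot F_{s+1}\neq 0$ for any non-trivial $z$ because the slot-$(s+1)$ parts of $z\cdot\omega_{s+1}$ and $z\cdot R_{s+1}$ live over different basis elements.

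The genuine gap is the base case $s=3$, which after stabilization is the entire content of the proposition. You correctly diagnose the difficulty (neither $F_2$ nor $F_3$ carries a pure top-class term in its own slot when $e\geq 2$, since $x_{2,1}^{2^{e+1}-1}=0$ and $x_{3,3}^{2^{e+1}-3}=0$ by~(\ref{altuhigh})) and propose the right kind of attack (mod-$2$ binomial arithmetic plus reduction to the basis~(\ref{basetensorial}) through Proposition~\ref{neglect}). But you stop at proposing, explicitly deferring the identification of the contributing basis elements to a computer search. The paper actually carries out this computation in closed form: it introduces an ``$\equiv$'' relation (discard anything that, because of the relations~(\ref{noextendedreeplazo}), cannot reach the unique top-dimensional basis element), then simplifies $z_{3,3}^{2^{e+1}-3}$, then $z_{3,2}^{2^e-1}z_{3,3}^{2^{e+1}-3}$, then $z_{3,1}^{2^e-1}z_{3,2}^{2^e-1}z_{3,3}^{2^{e+1}-3}$ in sequence, performs the analogous simplification of the slot-$2$ triple, and multiplies the two results. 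Until that (or an equivalent) verification that the coefficient of $\omega$ is $1\bmod 2$ is supplied, what you have is a plan rather than a proof.
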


\begin{remark}\label{dosnotasaclaras}{\em
Note that the powers of the factors $z_{2,j}$ in the three products above coincide with the relevant power(s) of the products in~(\ref{elnocero1}) with $k=1$,~(\ref{mascompli}) with $k=2$, and~(\ref{observaciondeenrique}) for $k=3$. In the present case ($s\geq3$), the form of the powers of the factors $z_{3,j}$ is what allows us to get sharp results. Also worth mentioning is the possibility that Theorem~\ref{familiasbis} could hold true by simplifying the restriction ``$k\leq3\leq s$'' to ``$k\leq s$'' (see Remark~\ref{conclusion} for a more general possibility). The proof of such an assertion seems to require computational input (suggesting suitable generalizations of Propositions~\ref{prodsnocerok12} and~\ref{prodsnocerok3}) that does not seem to be currently available with today's computer capabilities.
}\end{remark}

Lemma~\ref{stabilizacion} below implies that it suffices to prove Propositions~\ref{prodsnocerok12} and~\ref{prodsnocerok3} for $s=3$.
\begin{lemma}\label{stabilizacion}
For $2\leq i\leq s$, the expression of $$z_{i,1}^{m+k-1}z_{i,2}^{m+k-2}\cdots z_{i,k-1}^{m+1} z_{i,k}^{m}+x_i(m+k-1,m+k-2,\cdots,m+1,m)\in H^*(F(1^k,m))^{\otimes s}$$ in terms of the basis in~(\ref{basetensorial}) involves only basis elements of the form $$x_{1}(r_1,\ldots,r_k)\cdot x_i(t_1,\ldots,t_k)$$ with $r_j>0$ for some $j\in\{1,\ldots,k\}$ $\,($so $t_{j'}<m+k-j'$ for some $j'\in\{1,\ldots,k\})$.
\end{lemma}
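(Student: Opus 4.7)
My plan is that this lemma should succumb to a direct binomial expansion, with essentially no cohomological relations needed. I would begin by noting the structural observation that each factor $z_{i,j}=x_{1,j}+x_{i,j}$ involves variables only from the first and $i$-th tensor coordinates. Hence the full product
$$P_i:=z_{i,1}^{m+k-1}z_{i,2}^{m+k-2}\cdots z_{i,k-1}^{m+1}z_{i,k}^{m}$$
lies in the subring generated by the $x_{1,j}$ and $x_{i,j}$ for $1\le j\le k$, i.e.\ in the image of the natural inclusion of the $(1,i)$-bifactor $H^*(F(1^k,m))^{\otimes 2}\hookrightarrow H^*(F(1^k,m))^{\otimes s}$. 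This already eliminates any basis element of~(\ref{basetensorial}) having a nontrivial contribution from a coordinate other than $1$ or $i$.

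Next, I would apply the mod-$2$ binomial theorem to each factor $z_{i,j}^{m+k-j}=\sum_{a_j=0}^{m+k-j}\binom{m+k-j}{a_j}x_{1,j}^{a_j}x_{i,j}^{m+k-j-a_j}$ and distribute. The resulting expansion of $P_i$ is a sum, indexed by tuples $(a_1,\ldots,a_k)$ with $0\le a_j\le m+k-j$, of monomials
$$\prod_{j=1}^{k}\binom{m+k-j}{a_j}\cdot x_1(a_1,\ldots,a_k)\cdot x_i(m+k-1-a_1,\ldots,m-a_k).$$
The key numerical observation is that for each $j$, both exponents $a_j$ and $m+k-j-a_j$ lie in the interval $[0,m+k-j]$, which is precisely the range prescribed by Proposition~\ref{f1km} for the $j$-th variable. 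Hence each summand is already a basis element of~(\ref{basetensorial}); no further reduction via the relations~(\ref{mrf1km}) is required. This is what makes the argument clean and avoids the more delicate manipulations used elsewhere in the paper.

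Finally, I would isolate the summand corresponding to $(a_1,\ldots,a_k)=(0,\ldots,0)$, which contributes precisely $x_i(m+k-1,m+k-2,\ldots,m+1,m)$ (the first tensor factor being trivial). Adding this summand to $P_i$ (over $\mathbb{F}_2$) cancels it out and leaves a sum over all tuples $(a_1,\ldots,a_k)\ne(0,\ldots,0)$. For any such remaining term, setting $r_j=a_j$ and $t_j=m+k-j-a_j$, there is some index $j$ with $r_j>0$, and for that same $j'=j$ we automatically have $t_{j'}=m+k-j'-a_{j'}<m+k-j'$. This gives exactly the form claimed in the lemma.

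There is no serious obstacle: the entire argument reduces to a bookkeeping exercise after the binomial expansion, and the ranges in the basis~(\ref{basisf1km}) are arranged so that the cross-terms $x_{1,j}^{a_j}x_{i,j}^{m+k-j-a_j}$ are automatically in normal form. The only delicate point is the observation that the exponent bound $m+k-j$ appears both in the definition of $P_i$ and in the range of the basis monomials, so the expansion lands directly inside the tensor basis.
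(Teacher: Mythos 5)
Your proof is correct and follows exactly the approach the paper intends: the paper's proof simply says to "expand out \dots\ and notice that all the resulting monomials are basis elements," which is precisely your binomial expansion together with the observation that both exponents $a_j$ and $m+k-j-a_j$ fall in the allowed range $[0,m+k-j]$. You have merely spelled out the bookkeeping the paper leaves to the reader.
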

\begin{proof}
Expand out $z_{i,1}^{m+k-1}z_{i,2}^{m+k-2}\cdots z_{i,k-1}^{m+1} z_{i,k}^{m}$ and notice that all the resulting monomials are basis elements.
\end{proof}

\begin{corollary}\label{monotonicidad}
For $k,m\geq1$, $G(k,m,2)\geq G(k,m,3)\geq G(k,m,4)\geq\cdots\geq0$.
\end{corollary}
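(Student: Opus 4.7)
The plan is to reduce the inequality $G(k,m,s)\geq G(k,m,s+1)$ to the statement
$$\zcl_{s+1,\mathbb{Z}_2}(F(1^k,m))\geq\zcl_{s,\mathbb{Z}_2}(F(1^k,m))+d_{k,m},$$
and then to manufacture, out of any length-$N$ non-trivial product of $s$-th zero divisors (with $N=\zcl_{s,\mathbb{Z}_2}$), a non-trivial product of $N+d_{k,m}$ many $(s+1)$-th zero divisors. The lower bound $G(k,m,s)\geq 0$ is automatic: every non-zero element of $\ker(\Delta_s^*)$ has positive degree (since $\Delta_s^*$ is an isomorphism in degree zero for the connected space $F(1^k,m)$), and $H^*(F(1^k,m)^s)$ vanishes above degree $s\,d_{k,m}$, so $\zcl_{s,\mathbb{Z}_2}\leq s\,d_{k,m}$.

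For the main step, set $X=F(1^k,m)$, fix zero divisors $\alpha_1,\ldots,\alpha_N\in\ker(\Delta_s^*)$ with $\alpha:=\alpha_1\cdots\alpha_N\neq 0$, and consider the projection $\pi\colon X^{s+1}\to X^s$ onto the first $s$ factors. Since $\pi\circ\Delta_{s+1}=\Delta_s$, each $\pi^*(\alpha_r)$ lies in $\ker(\Delta_{s+1}^*)$, and by K\"unneth $\pi^*(\alpha)=\alpha\otimes 1\in H^*(X^s)\otimes H^*(X)$ is non-zero (the second tensor factor records position $s+1$). I would then multiply by
$$Q:=z_{s+1,1}^{m+k-1}z_{s+1,2}^{m+k-2}\cdots z_{s+1,k}^{m}\in H^*(X)^{\otimes(s+1)},$$
which is itself a product of exactly $d_{k,m}=\sum_{j=1}^{k}(m+k-j)$ many $(s+1)$-th zero divisors.

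To show $\pi^*(\alpha)\cdot Q\neq 0$ I would invoke Lemma~\ref{stabilizacion} at position $i=s+1$, which yields
$$Q=x_{s+1}(m+k-1,\ldots,m)+R,$$
where every basis summand of $R$ has the form $x_1(r_1,\ldots,r_k)\cdot x_{s+1}(t_1,\ldots,t_k)$ with the position-$(s+1)$ piece $x_{s+1}(t_1,\ldots,t_k)$ \emph{not} equal to the top basis element. Under the K\"unneth identification $H^*(X^{s+1})=H^*(X^s)\otimes H^*(X)$ the product reads
$$\pi^*(\alpha)\cdot Q=\alpha\otimes x(m+k-1,\ldots,m)+\sum_\ell(\alpha\cdot\gamma_\ell)\otimes x(t^\ell),$$
with each $x(t^\ell)$ a basis element of $H^*(X)$ distinct from the top one. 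The first summand occupies a K\"unneth summand linearly independent of the rest and is non-zero because $\alpha\neq 0$; hence $\pi^*(\alpha)\cdot Q\neq 0$, which displays a non-trivial product of $N+d_{k,m}$ many $(s+1)$-th zero divisors.

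The only genuinely delicate ingredient is the appeal to Lemma~\ref{stabilizacion}: without the guarantee that the $(s+1)$-th-factor piece of each summand of $R$ falls strictly below the top basis element, the K\"unneth projection could in principle fail to isolate the main term and one would need a more elaborate combinatorial analysis. With that lemma in hand, the remainder is routine bookkeeping.
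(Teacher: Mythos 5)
Your argument is correct and follows essentially the same route as the paper: multiply by $Q=z_{s+1,1}^{m+k-1}\cdots z_{s+1,k}^{m}$ (a product of exactly $d_{k,m}$ many $(s+1)$-th zero-divisors) and invoke Lemma~\ref{stabilizacion} to see that the component of $\pi^*(\alpha)\cdot Q$ along $x(m+k-1,\ldots,m)$ in the last K\"unneth factor is precisely $\alpha\neq0$. The paper's proof is more terse, leaving the pullback step ($\pi^*(\ker\Delta_s^*)\subseteq\ker\Delta_{s+1}^*$) and the K\"unneth bookkeeping implicit, but the key lemma and the multiplying class are the same.
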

\begin{proof}
If $z\in H^*(F(1^k,m))^{\otimes s}$ is some non-trivial product of $s$-th zero-divisors, Lemma~\ref{stabilizacion} implies that $z\cdot z_{s+1,1}^{m+k-1}z_{s+1,2}^{m+k-2}\cdots z_{s+1,k-1}^{m+1} z_{s+1,k}^{m}\in H^*(F(1^k,m))^{\otimes (s+1)}$ is non-trivial too. The result then follows from the bare definition of the function $G(k,m,s)$.
\end{proof}

Propositions~\ref{prodsnocerok12} and~\ref{prodsnocerok3} are proved by direct computation of the given products. In all cases, advantage is taken of the fact that the products lie in the top dimension $s(km+\binom{k}{2})$ of the relevant ring $H^*(F(1^k,m))^{\otimes s}$, where the additive basis~(\ref{basetensorial}) reduces to the single element
\begin{equation}\label{onlybasiselement}
\prod_{i=1}^{s}x_i(m+k-1,m+k-2,\ldots,m+1,m).
\end{equation}
Explicitly, we use the inductive process indicated in the proof of Proposition~\ref{f1km}, except that, since~(\ref{onlybasiselement}) is the only basis element we care about, the extended relations in~(\ref{opgif}) can  be replaced by the relations
\begin{equation}\label{noextendedreeplazo}
x_i^{m+k-j}=x_i^{m+k-i} e_{i-j}(x_1,\ldots x_{i-1}),\;\,\mbox{for} \,\;0\leq j\leq i\leq k \,\;\mbox{and}\,\; i\geq1,
\end{equation}
coming from Proposition~\ref{neglect}. Proof details for Proposition~\ref{prodsnocerok12} are similar (and easier) than those for Proposition~\ref{prodsnocerok3}, so we only focus on the latter case.

\begin{proof}[Proof of Proposition~\ref{prodsnocerok3}]
By Lemma~\ref{stabilizacion} (see also the proof of Corollary~\ref{monotonicidad}), we only need to consider the case $s=3$. We will show that for $e\geq2$,
\begin{equation}\label{amasar}
(z_{2,1}^{2^{e+1}-1}z_{2,2}^{2^{e+1}-2}z_{2,3}^{2^e-1})
\cdot(z_{3,1}^{2^e-1}z_{3,2}^{2^e-1}z_{3,3}^{2^{e+1}-3})=\prod_{i=1}^{3} x_{i,1}^{2^e}x_{i,2}^{2^e-1}x_{i,3}^{2^e-2},
\end{equation}
the top basis element in $ H^*(F(1^3,2^e-2))^{\otimes 3}$.

\smallskip
The usual mod-2 arithmetic of binomial coefficients, and the fact that $x_{i,j}^{2^e+1}=0$ give 
$$z_{3,3}^{2^{e+1}-3}=(x_{1,3}+x_{3,3})^{2^{e+1}-3}=x_{1,3}^{2^e}x_{3,3}^{2^e-3}+x_{1,3}^{2^e-3}x_{3,3}^{2^e}$$
(of course, this uses the hypothesis $e\geq2$). Due to the form of the relations~(\ref{noextendedreeplazo}) ---or~(\ref{opgif}) for that matter--- and since $x_{3,3}^{2^e-3}$ is a basis element, the term $x_{1,3}^{2^e}x_{3,3}^{2^e-3}$ above cannot contribute to the top basis element. In other words, the considerations around~(\ref{onlybasiselement}) imply that the product of the term $x_{1,3}^{2^e}x_{3,3}^{2^e-3}$ with the first five powers on the left of~(\ref{amasar}) vanishes. Such an argument will be used repeatedly in what follows, and will simply be referred to by using a ``$\equiv$'' symbol. In these terms, the relations~(\ref{noextendedreeplazo}) allow us to extend the short  calculation above to
$$z_{3,3}^{2^{e+1}-3}=(x_{1,3}+x_{3,3})^{2^{e+1}-3}=x_{1,3}^{2^e}x_{3,3}^{2^e-3}+x_{1,3}^{2^e-3}x_{3,3}^{2^e}\equiv x_{1,3}^{2^e-3}x_{3,3}^{2^e}=x_{1,3}^{2^e-3}x_{3,1}x_{3,2}\cdot x_{3,3}^{2^e-2},$$
\begin{eqnarray*}
z_{3,2}^{2^e-1}z_{3,3}^{2^{e+1}-3}&\equiv&z_{3,2}^{2^e-1}x_{1,3}^{2^e-3}x_{3,1}x_{3,2}\cdot x_{3,3}^{2^e-2}\\&=&(x_{1,2}+x_{3,2})^{2^e-1}x_{1,3}^{2^e-3}x_{3,1}x_{3,2}\cdot x_{3,3}^{2^e-2}\\
&\equiv&(x_{1,2}x_{3,2}^{2^e-2}+x_{3,2}^{2^e-1})x_{1,3}^{2^e-3}x_{3,1}x_{3,2}\cdot x_{3,3}^{2^e-2}\\ &=& x_{1,3}^{2^e-3}x_{3,1}(x_{1,2}+x_{3,1})\cdot x_{3,2}^{2^e-1}x_{3,3}^{2^e-2},
\end{eqnarray*}
and
\begin{eqnarray*}
z_{3,1}^{2^e-1}z_{3,2}^{2^e-1}z_{3,3}^{2^{e+1}-3}&\equiv&(x_{1,1}+x_{3,1})^{2^e-1}x_{1,3}^{2^e-3}x_{3,1}(x_{1,2}+x_{3,1})\cdot x_{3,2}^{2^e-1}x_{3,3}^{2^e-2}\\&\equiv&x_{1,3}^{2^e-3}(x_{1,1}x_{3,1}^{2^e-2}+x_{3,1}^{2^e-1})x_{3,1}(x_{1,2}+x_{3,1})\cdot x_{3,2}^{2^e-1}x_{3,3}^{2^e-2}\\&=&x_{1,3}^{2^e-3}(x_{1,1}+x_{1,2})\cdot x_{3,1}^{2^e}x_{3,2}^{2^e-1}x_{3,3}^{2^e-2}.
\end{eqnarray*}
An entirely similar (and straightforward) calculation gives
$$
z_{2,1}^{2^{e+1}-1}z_{2,2}^{2^{e+1}-2}z_{2,3}^{2^e-1}\equiv(x_{1,1}^{2^e}x_{1,2}^{2^e-2}x_{1,3}+x_{1,1}^{2^e-1}x_{1,2}^{2^e})\cdot x_{2,1}^{2^e}x_{2,2}^{2^e-1}x_{2,3}^{2^e-2},
$$
and the result then follows since an additional (and much simpler) such computation gives $x_{1,3}^{2^e-3}(x_{1,1}+x_{1,2})\cdot (x_{1,1}^{2^e}x_{1,2}^{2^e-2}x_{1,3}+x_{1,1}^{2^e-1}x_{1,2}^{2^e})\equiv x_{1,1}^{2^e}x_{1,2}^{2^e-1}x_{1,3}^{2^e-2}$.
\end{proof}

The Serre spectral sequence method used in Section~\ref{seczcl1k} could now be coupled with Propositions~\ref{prodsnocerok12} and~\ref{prodsnocerok3} to get an extension of Theorem~\ref{familiasbis} on the lines of Corollary~\ref{tcf13m}. However such a task would need to be done in a carefully selective way %\footnote{The authors hope to come back to such a point elsewhere.} 
as, in some cases, the direct computations in the previous proof give better results. In fact, as the following example suggests (see also the proof of Proposition~\ref{nocero112}), best results can be obtained by a suitable combination of both techniques.

\begin{example}\label{sss112e}{\em
Proposition~\ref{prodsnocerok12}(\ref{paraadelante}) and the Serre spectral sequence applied to the fibration $F(1,2^e)\to F(1,1,2^e)\to F(1,2^e+1)$ (with $e\geq1$) yield the non-triviality of
$$
(z_{2,1}^{2^{e+1}-1}z_{2,2}^{2^{e+1}-1})\cdot(z_{3,1}^{2^e+1}z_{3,2}^{2^e+1})\cdot(z_{4,1}^{2^e}z_{4,2}^{2^e})\,\cdots\,(z_{s,1}^{2^e}z_{s,2}^{2^e})\in H^*(F(1,1,2^e))^{\otimes s}
$$
for $s\geq3$. But one cn do better. For instance, a direct argument (spelled out in Proposition~\ref{nocero112} below) gives in fact the non-triviality of
\begin{equation}\label{moreinvrec}
\left(z_{2,1}^3 z_{2,2}^3\right) \cdot\left( z_{3,1}^3 z_{3,2}^3\right)\cdot\left( z_{4,1}^3 z_{4,2}^2\right)\cdots \left(z_{s,1}^3 z_{s,2}^2\right)\in H^*(F(1,1,2))^{\otimes s}
\end{equation}
for $s\geq3$, so that $5s-3\leq\TC_s(F(1,1,2))\leq5s$. As a result we have that $G(2,2,s)\leq3$ provided $s\geq3$ (recall from Corollary~\ref{zcllower2e} that $G(2,2^e,2)=4$). In fact, extensive computer computations (not given here) suggest that 
\begin{equation}\label{decomparacion}
\mbox{$G(2,2,s)=3\;$ when $\,s\geq3$.}
\end{equation}
The key point then comes from the fact that Theorem~\ref{2alaehigher} below gives the sharper result\footnote{Computer calculations suggest that, in fact, $G(2,2^e,s)=1$ for $e\geq2$ and $s\geq3$.}
$$
\mbox{$G(2,2^e,s)\leq1\;$ for $s\geq3$ and $e\geq2$.}
$$
}\end{example}

\begin{proposition}\label{nocero112}
The element in~$(\ref{moreinvrec})$ is non-zero.
\end{proposition}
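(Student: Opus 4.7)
The plan is to reduce to the base case $s = 3$ via Lemma~\ref{stabilizacion} and then verify non-triviality by identifying a single basis element whose coefficient in the expanded product equals $1 \in \mathbb{F}_2$. For $i \geq 4$, the factor $z_{i,1}^3 z_{i,2}^2$ coincides with $z_{i,1}^{m+k-1}z_{i,2}^{m+k-2}$ for $(k,m) = (2,2)$, so arguing exactly as in the proof of Corollary~\ref{monotonicidad}, once non-triviality is established for $s=3$, multiplying successively by the tail factors $z_{i,1}^3 z_{i,2}^2$ for $i = 4, \ldots, s$ preserves non-triviality.

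For the base case $s=3$, set $E := z_{2,1}^3 z_{2,2}^3 z_{3,1}^3 z_{3,2}^3$. Since $\binom{3}{k} \equiv 1 \pmod 2$ for $0 \le k \le 3$, a plain binomial expansion gives
\begin{equation*}
E = \sum_{(a,b,c,d)\in\{0,1,2,3\}^4} x_{1,1}^{a+b}\, x_{1,2}^{c+d} \cdot x_{2,1}^{3-a}\, x_{2,2}^{3-c} \cdot x_{3,1}^{3-b}\, x_{3,2}^{3-d}.
\end{equation*}
I will show that the basis element $B := x_{1,1}^2 \cdot x_{2,1}^3 x_{2,2}^2 \cdot x_{3,1}^3 x_{3,2}^2$ appears in $E$ with coefficient $1$. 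Because the defining relations $x_{i,1}^4 = 0$ and $x_{i,2}^3 = x_{i,1}^3 + x_{i,1}^2 x_{i,2} + x_{i,1} x_{i,2}^2$ (from Proposition~\ref{f1km}) act within each tensor slot, the coefficient of $B$ factors as a product of three per-slot contributions.

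In the second tensor slot, $x_{2,1}^{3-a} x_{2,2}^{3-c}$ contributes $x_{2,1}^3 x_{2,2}^2$ exactly when $(a,c) \in \{(0,1),(1,0)\}$: the first case is direct, and in the second one $x_{2,1}^2 x_{2,2}^3 = x_{2,1}^2(x_{2,1}^3 + x_{2,1}^2 x_{2,2} + x_{2,1} x_{2,2}^2) = x_{2,1}^3 x_{2,2}^2$, the other two summands being killed by $x_{2,1}^4 = 0$. An identical analysis in the third slot yields $(b,d) \in \{(0,1),(1,0)\}$. In the first slot, $x_{1,1}^{a+b} x_{1,2}^{c+d}$ coincides with the basis element $x_{1,1}^2$ precisely when $(a+b,c+d)=(2,0)$. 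Combining the four constraints $a+c = 1$, $b+d = 1$, $a+b = 2$, $c+d = 0$ forces the unique quadruple $(a,b,c,d) = (1,1,0,0)$, so the coefficient of $B$ in $E$ equals $1 \cdot 1 \cdot 1 = 1$, and $E \neq 0$ as required.

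The main obstacle is verifying that no other quadruples $(a,b,c,d)$ contribute to $B$. Degree counting handles this cleanly: $B$ has degree $2$ in slot $1$ and degree $5$ in slots $2$ and $3$, so only monomials of those exact degrees can contribute, which forces $a+c = b+d = 1$ (the alternative $(a,c) = (0,0)$ gives $x_{2,1}^3 x_{2,2}^3 = 0$ via the cubic relation combined with $x_{2,1}^4 = 0$, and similarly for $(b,d) = (0,0)$). With $(a,c),(b,d) \in \{(0,1),(1,0)\}$ forced, the resulting first-slot monomials $x_{1,1}^2,\, x_{1,1}x_{1,2},\, x_{1,2}^2$ are already basis elements (no further reductions needed), and only the quadruple $(1,1,0,0)$ matches the first slot of $B$, so the coefficient count is complete.
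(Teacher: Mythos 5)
Your proof is correct, and for $s\geq4$ it invokes exactly the same stabilization argument as the paper (Lemma~\ref{stabilizacion} via the proof of Corollary~\ref{monotonicidad}, applied with $z_{i,1}^{m+k-1}z_{i,2}^{m+k-2}=z_{i,1}^3z_{i,2}^2$ for $(k,m)=(2,2)$). Where you genuinely diverge is in the base case $s=3$. The paper's proof simply points back to the first sentence of Example~\ref{sss112e}, which establishes the $s=3$ non-triviality by combining Proposition~\ref{prodsnocerok12}(\ref{paraadelante}) (with $e=1$) with a Serre spectral sequence argument for the term-wise cube of the fibration $F(1,2)\to F(1,1,2)\to F(1,3)$. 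You instead bypass the spectral sequence entirely: you expand $E=z_{2,1}^3z_{2,2}^3z_{3,1}^3z_{3,2}^3$ directly using $\binom{3}{k}\equiv1\pmod 2$, reduce slot-by-slot with the two relations $x_{i,1}^4=0$ and $x_{i,2}^3=x_{i,1}^3+x_{i,1}^2x_{i,2}+x_{i,1}x_{i,2}^2$ from Proposition~\ref{f1km}, and isolate the unique quadruple $(a,b,c,d)=(1,1,0,0)$ contributing to the basis element $x_{1,1}^2\cdot x_{2,1}^3x_{2,2}^2\cdot x_{3,1}^3x_{3,2}^2$ via degree constraints on each tensor slot. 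This is a sound calculation (the observation that degree-$2$ monomials in the first slot are already basis elements, so no further reduction can shuffle contributions, closes the potential gap). The trade-off: your argument is more elementary and self-contained, and it hands over an explicit witness monomial with coefficient $1$, whereas the paper's argument is shorter and modular, leaning on machinery already set up for Proposition~\ref{prodsnocerok12} and the spectral sequence technique used throughout Section~\ref{seczcl1k}.
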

\begin{proof}
The assertion for $s=3$ has been observed in the first sentence of Example~\ref{sss112e}. The case $s\geq4$ then follows from (the proof of) Corollary~\ref{monotonicidad}.
\end{proof}

As anticipated in Example~\ref{sss112e}, we also describe, for $s\geq3$ and $e\geq2$, an almost sharp estimate for $\TC_s(F(1,1,2^e))$.

\begin{theorem}\label{2alaehigher} 
For $e\geq2$ and $s\geq3$,
$$
0\neq\left(z_{2,1}^{2^{e+1}-1} z_{2,2}^{2^{e+1}-1}\right) \cdot \left( z_{3,1}^{2^e+1} z_{3,2}^{2^e+3}\rule{0mm}{4mm}\right) \cdot\left( z_{4,1}^{2^e+1} z_{4,2}^{2^e}\rule{0mm}{4mm}\right)\cdots \left(z_{s,1}^{2^e+1} z_{s,2}^{2^e}\rule{0mm}{4mm}\right)\in H^*(F(1,1,2^e))^{\otimes s},
$$
consequently,  $\,s(2^{e+1}+1)-1\leq\TC_s(F(1,1,2^e))\leq s(2^{e+1}+1).$
\end{theorem}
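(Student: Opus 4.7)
The plan is to first reduce to $s=3$ by invoking Lemma~\ref{stabilizacion} (equivalently, the proof of Corollary~\ref{monotonicidad}). The factors $z_{i,1}^{2^e+1} z_{i,2}^{2^e}$ appearing for $i \ge 4$ are precisely of the stabilization form $z_{i,1}^{m+k-1} z_{i,2}^{m+k-2}$ with $(k,m) = (2, 2^e)$, so non-triviality for $s=3$ automatically propagates to all $s\ge3$.

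For $s=3$, I need to show that
\[P \;:=\; \bigl(z_{2,1}^{2^{e+1}-1}\, z_{2,2}^{2^{e+1}-1}\bigr)\bigl(z_{3,1}^{2^e+1}\, z_{3,2}^{2^e+3}\bigr) \;\neq\; 0 \;\in\; H^*(F(1,1,2^e))^{\otimes 3}.\]
Note that $\deg P = 3 \cdot 2^{e+1} + 2$, exactly one below the top degree $3(2^{e+1}+1)$. I target the specific basis element
\[T \;:=\; x_1(2^e+1, 2^e)\cdot x_2(2^e, 2^e)\cdot x_3(2^e+1, 2^e)\]
and compute its coefficient in $P$ directly, in the spirit of the proof of Proposition~\ref{prodsnocerok3}.

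Using Lucas' theorem and the relations $x_{i,j}^{2^e+2}=0$ from~(\ref{altuhigh}), each of the four factors $z_{2,1}^{2^{e+1}-1}$, $z_{2,2}^{2^{e+1}-1}$, $z_{3,1}^{2^e+1}$, $z_{3,2}^{2^e+3}$ expands into exactly four surviving monomials; the relation $h_{2^e+1}(x_{i,1}, x_{i,2})=0$ is then applied to reduce the $x_{i,2}^{2^e+1}$-terms that arise in the first and third tensor factors. A careful enumeration shows that exactly two choices of indices in $z_{2,1}^{2^{e+1}-1}\, z_{2,2}^{2^{e+1}-1}$ produce $x_2(2^e,2^e)$ on the middle tensor factor (a direct choice and one using the factor-2 reduction of $x_{2,2}^{2^e+1}$), and exactly two choices in $z_{3,1}^{2^e+1}\, z_{3,2}^{2^e+3}$ produce $x_3(2^e+1,2^e)$ on the third tensor factor (the hypothesis $e\ge2$ enters here to rule out an otherwise-possible extra contribution from the $a'=2^e+1$ term of $z_{3,2}^{2^e+3}$). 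Of the four resulting cross-products, one vanishes because its first-tensor-factor contribution degenerates to $x_{1,2}^{2^e+2}=0$, while the remaining three each collapse, after at most one further application of $h_{2^e+1}$ to reduce $x_{1,2}^{2^e+1}$ in the first tensor factor, to $T$ with coefficient $1\in\mathbb{F}_2$. Summing, $T$ has coefficient $3\equiv 1\pmod 2$ in $P$, so $P\ne 0$. The upper bound $\TC_s \le s(2^{e+1}+1)$ then follows from Proposition~\ref{cotasseq} since $\dim F(1,1,2^e) = 2^{e+1}+1$.

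The principal obstacle is the choice of target $T$ together with the parity bookkeeping of the cross-products. A more symmetric-looking candidate—for instance the near-top basis element whose middle tensor factor is $x_2(2^e+1, 2^e-1)$ (placing the deficiency in the $x_2$-exponent rather than the $x_1$-exponent)—actually receives coefficient $0$, because its two contributing configurations cancel in $\mathbb{F}_2$. The particular exponent $2^e+3$ on $z_{3,2}$ in the theorem statement is engineered precisely so that, against $T$, the count of surviving contributions comes out odd.
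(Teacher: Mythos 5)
Your proof is correct and follows essentially the same strategy as the paper: reduce to $s=3$ via Lemma~\ref{stabilizacion}, then verify non-vanishing by computing the coefficient of a specific basis element one degree below the top of $H^*(F(1,1,2^e))^{\otimes 3}$, after expanding the four zero-divisor powers using Lucas and reducing via $h_{2^e+1}(x_1,x_2)=0$. The only substantive difference is the choice of target. The paper targets $x_1(2^e,2^e)\cdot x_2(2^e+1,2^e)\cdot x_3(2^e+1,2^e)$, placing the ``deficiency'' in the \emph{first} tensor coordinate so that the second and third coordinates carry the top basis element $x(2^e+1,2^e)$. With this choice the paper writes $z_{2,1}^{2^{e+1}-1}z_{2,2}^{2^{e+1}-1}\equiv x_{1,1}^{2^e-1}x_{1,2}^{2^e-2}+x_{1,1}^{2^e-2}x_{1,2}^{2^e-1}$ and $z_{3,1}^{2^e+1}z_{3,2}^{2^e+3}\equiv x_{1,1}x_{1,2}^2+x_{1,2}^3$, and the product of these two binomials collapses to the single monomial $x_{1,1}^{2^e}x_{1,2}^{2^e}$ by an exact cancellation of the two cross-terms \emph{before} any further basis reduction in the first coordinate. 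You instead target $x_1(2^e+1,2^e)\cdot x_2(2^e,2^e)\cdot x_3(2^e+1,2^e)$ (deficiency in the middle coordinate), which requires one more application of $h_{2^e+1}$ in the first tensor coordinate and a $3\equiv 1\pmod 2$ parity count rather than a clean $1=1$; the enumeration you describe (two middle-coordinate configurations $\times$ two third-coordinate configurations, one product vanishing, three surviving) checks out. Your observation that the alternative target $x_2(2^e+1,2^e-1)$ gets coefficient $0$ is also correct and worth noting. The paper's choice is marginally more transparent, but both are valid and genuinely parallel computations.
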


\begin{remark}\label{creencia}{\em
Just as observed in Remark~\ref{dosnotasaclaras} in the case of Propositions~\ref{prodsnocerok12} and~\ref{prodsnocerok3}, the powers of the factors $z_{2,j}$ in the product element of Theorem~\ref{2alaehigher} coincide with the relevant powers of the product in~(\ref{elnocero1}) for $k=2$. The authors believe that such a phenomenon should shed light on possible generalizations of Theorems~\ref{familiasbis} and~\ref{2alaehigher}---see for instance Examples~\ref{exafins}.}\end{remark}

\begin{remark}{\em
Theorem~\ref{2alaehigher} fails for $e=1$ as $0=x_{1,2}^4+x_{3,2}^4=z_{3,2}^4\in H^*(F(1,1,2))^{\otimes s}$ in view of~(\ref{altuhigh}).
}\end{remark}

\begin{proof}[Proof of Theorem~\ref{2alaehigher}]
As in previous proofs, we can safely assume $s=3$. Further, although we should not focus now on the top dimensional basis element~(\ref{onlybasiselement}), the needed verifications are similar to those in the proof of Proposition~\ref{prodsnocerok3}. Indeed, this time we indicate how, for $e\geq2$, the basis element $x_{1,1}^{2^e}x_{1,2}^{2^e}\cdot x_{2,1}^{2^e+1}x_{2,2}^{2^e}\cdot x_{3,1}^{2^e+1}x_{3,2}^{2^e}$ appears in the expression of $(z_{2,1}^{2^{e+1}-1} z_{2,2}^{2^{e+1}-1}) \cdot ( z_{3,1}^{2^e+1} z_{3,2}^{2^e+3}\rule{0mm}{4mm})$ in terms of the tensor basis~(\ref{basetensorial}). The hypothesis $e\geq2$ is used for the analysis of the mod-2 arithmetics of binomial coefficients. That being said, the calculation details can easily be carried out by the diligent reader. As a guide, we note that the three key steps are
\begin{eqnarray*}
z_{3,1}^{2^e+1}z_{3,2}^{2^e+3}&\equiv&x_{1,1}x_{1,2}^2+x_{1,2}^3\,,\\z_{2,1}^{2^{e+1}-1}z_{2,2}^{2^{e+1}-1}&\equiv&x_{1,1}^{2^e-1}x_{1,2}^{2^e-2}+x_{1,1}^{2^e-2}x_{1,2}
^{2^e-1},
\end{eqnarray*}
and the easy fact that $(x_{1,1}x_{1,2}^2+x_{1,2}^3)\cdot(x_{1,1}^{2^e-1}x_{1,2}^{2^e-2}+x_{1,1}^{2^e-2}x_{1,2}
^{2^e-1})=x_{1,1}^{2^e}x_{1,2}^{2^e}$.
\end{proof}

\begin{remark}{\em\label{conclusion}
The results in this section suggest that purely cohomological methods can be used to give, for positive integers $i$ and $k$, an estimate of the higher topological complexity of $F(1^k,2^e-k+i)$ giving $G(k,2^e-k+i,s)<i$ provided $e$ is sufficiently large. An interesting additional restriction of the form
\begin{equation}\label{addres}
k+i-1\leq s,
\end{equation}
which would be compatible with the corresponding restrictions in Theorems~\ref{familiasbis} and~\ref{2alaehigher}, might also be needed. See Examples~\ref{exafins} below.
}\end{remark}

We close the section with a few examples offering small additional evidence to the result suggested in Remark~\ref{conclusion}.

\begin{examples}\label{exafins}{\em
For $F_5$ (i.e.~the flag manifold $F(1^k,2^e-k+i)$ in Remark~\ref{conclusion} with $k=4$, $e=2$, and $i=1$), a computer can be used to verify the non-triviality of $$z_{2,1}^7z_{2,2}^6z_{2,3}^3z_{2,4}^2\cdot z_{3,1}^1z_{3,2}^3z_{3,3}^5z_{3,4}^3\in H^*(F_5)^{\otimes 3}.$$ (The use of the sequence of exponents $7,6,3,2$ follows Remark~\ref{creencia} and the evidence noted right after~(\ref{evidentia}); the sequence of exponents $1,3,5,3$ was found---together with other five such sequences---by exhaustive computer calculations). This implies $\TC_s(F_5)=s\dim(F_5)=10 s$ for $s\geq3$. However, the computer experimentation also reports that, for $F(1^4,5)$ (i.e.~the flag manifold $F(1^k,2^e-k+i)$ now with $k=4$, $e=3$, and $i=1$), there is no corresponding non-zero product of the form $$z_{2,1}^{15}z_{2,2}^{14}z_{2,3}^7z_{2,4}^6\cdot z_{3,1}^az_{3,2}^bz_{3,3}^cz_{3,4}^d\in H^*(F(1^4,5))^{\otimes 3}$$ with $a+b+c+d=36$---which would yield the hoped-for equality
\begin{equation}\label{hopedfor}
\TC_s(F(1^4,5))=s\dim(F(1^4,5))=26\hspace{.2mm}s
\end{equation}
for $s\geq3$. Such an apparently unsuccessful situation is in fact compatible with (and reinforces)~(\ref{addres}), specially because a major computer search reports that
\begin{equation}\label{busqueda}
z_{2,1}^{15}z_{2,2}^{14}z_{2,3}^7z_{2,4}^6\cdot z_{3,1}^7z_{3,2}^7z_{3,3}^7z_{3,4}^{14}\cdot z_{4,1}^5z_{4,2}^7z_{4,3}^7z_{4,4}^8\in H^*(F(1^4,5))^{\otimes 4}
\end{equation}
(together with other 95 such polynomials in degree 104)
is non-zero. This of course implies that ~(\ref{hopedfor}) does hold as long as $s\geq 4$.
}\end{examples}

\section{Surfaces}\label{seccionsuperficies}
While Farber's topological complexity of a closed orientable surface was computed in the early work~\cite{MR1957228}, the non-orientable case has stood as a particularly intriguing task. For the lowest genus, it is known that $\TC(\mathbb{R}\mathrm{P}^2)=\Imm(\mathbb{R}\mathrm{P}^2)=3$, whereas a sophisticated obstruction theoretic analysis is given in~\cite{drani} to prove that (just as for oriented surfaces of genus at least 2) the topological complexity of a closed non-orientable surface of genus at least $4$ is 4. The topological complexity of the non-orientable surfaces of genera 2 and 3 remain still undecided. The goal of this section is to show that, just as for the semi complete flag manifolds in the previous section, the higher topological complexity of all (orientable or not) closed surfaces is fully accessible from cohomological methods.

\begin{proposition}\label{teosurbis}
Let $\Sigma$ stand for a closed surface (orientable or not) other than the sphere or the torus. Then $\TC_s(\Sigma)=2s$ for all $s\geq3$. 
\end{proposition}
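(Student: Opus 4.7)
The upper bound $\TC_s(\Sigma)\le 2s$ is immediate from Proposition~\ref{cotasseq}, since $\Sigma$ is a path-connected CW complex of dimension~$2$. The plan is thus to exhibit, for each such $\Sigma$, a nonzero product of $2s$ degree-one $s$-th zero-divisors in $H^*(\Sigma^s;\mathbb{F}_2)$, yielding $2s\le\zcl_{s,\mathbb{F}_2}(\Sigma)\le\TC_s(\Sigma)$. I would handle the non-orientable and the orientable cases separately.

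For $\Sigma=N_g$ non-orientable ($g\ge 1$) the first step is to produce $\alpha\in H^1(N_g;\mathbb{F}_2)$ with $\alpha^2\neq 0$. Since $\alpha^2=\mathrm{Sq}^1\alpha$ for $\alpha\in H^1$, and Wu's formula combined with $w_1=v_1$ for a surface gives $\mathrm{Sq}^1\alpha=w_1\cdot\alpha$, such an $\alpha$ exists because $w_1(N_g)\neq 0$ and Poincar\'e duality over $\mathbb{F}_2$ makes the cup-product pairing $H^1\otimes H^1\to H^2\cong\mathbb{F}_2$ non-degenerate. The subring $\mathbb{F}_2[\alpha]/(\alpha^3)\hookrightarrow H^*(N_g;\mathbb{F}_2)$ is canonically $H^*(\mathbb{R}\mathrm{P}^2;\mathbb{F}_2)$, and tensoring gives an injection $H^*(\mathbb{R}\mathrm{P}^2;\mathbb{F}_2)^{\otimes s}\hookrightarrow H^*(N_g^s;\mathbb{F}_2)$ that sends $s$-th zero-divisors to $s$-th zero-divisors. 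The nonzero length-$2s$ product supplied by Proposition~\ref{prodsnocerok12}(1) at $e=1$ (the case $\mathbb{R}\mathrm{P}^2=F(1,2)$) then transports to a nonzero product in $H^*(N_g^s;\mathbb{F}_2)$, finishing this case.

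For $\Sigma=\Sigma_g$ orientable with $g\ge 2$, the plan is a direct calculation. Fix a symplectic $\mathbb{F}_2$-basis $a_1,b_1,\ldots,a_g,b_g$ of $H^1(\Sigma_g;\mathbb{F}_2)$ with $a_ib_j=\delta_{ij}c$ and $a_ia_j=b_ib_j=0$, write $\bar a_{i,j}=\pi_i^*a_j+\pi_1^*a_j$ and $\bar b_{i,j}=\pi_i^*b_j+\pi_1^*b_j$ for the corresponding $s$-th zero-divisors, and consider
$$
P_s=\bar a_{2,2}\,\bar b_{s,2}\,\prod_{i=2}^{s}\bar a_{i,1}\,\bar b_{i,1}\in H^*(\Sigma_g^s;\mathbb{F}_2).
$$
The claim is that $P_s$ equals the top class $\pi_1^*c\,\pi_2^*c\cdots\pi_s^*c$. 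Expanding $P_s$ as a sum of monomials, each term corresponds to a choice of coordinate (out of two) for every atomic factor; for a term to hit top degree, each coordinate must receive exactly two factors whose underlying classes multiply to~$c$. Only $\bar a_{2,2}$ and $\bar b_{s,2}$ carry the classes $a_2$ and $b_2$, and their only common coordinate option is~$1$ (as $s\ge 3$), so this pair is forced to coordinate~$1$ and contributes $a_2b_2=c$ there. Each remaining pair $(\bar a_{i,1},\bar b_{i,1})$ must then sit at coordinate~$i$, contributing $a_1b_1=c$. Other class-level pairings, such as $(\bar a_{i,1},\bar b_{j,1})$ with $i\neq j$, have their only common coordinate at~$1$, which is already saturated, so they contribute nothing.

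The main obstacle is the combinatorial verification of the orientable case: one must check that exactly one monomial in the expansion of $P_s$ hits the top class, so that the coefficient is~$1$ (not~$0$) in $\mathbb{F}_2$. The design of $P_s$ is tuned so that the class multiplicities ($a_1^{\times(s-1)}, b_1^{\times(s-1)}, a_2, b_2$) combined with the relations $a_pb_q=\delta_{pq}c$ and $a_p^2=b_p^2=0$ force a unique pairing into $(a_k,b_k)$-pairs, and the coordinate restrictions built into each $\bar x_{i,j}$ then force a unique coordinate assignment compatible with that pairing. Once this check is completed, matching the upper and lower bounds gives $\TC_s(\Sigma)=2s$.
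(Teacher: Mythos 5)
Your proof is correct. For the non-orientable case you take essentially the paper's approach: locate a class $\alpha\in H^1(N_g;\mathbb{F}_2)$ with $\alpha^2\neq 0$, use it to embed $H^*(\mathbb{R}\mathrm{P}^2;\mathbb{F}_2)$ as a subring, tensor to get an embedding $H^*(\mathbb{R}\mathrm{P}^2)^{\otimes s}\hookrightarrow H^*(N_g)^{\otimes s}$ carrying $s$-th zero-divisors to $s$-th zero-divisors, and transport the nonzero length-$2s$ product from Proposition~\ref{prodsnocerok12}(\ref{paraadelante}) at $e=1$. The paper produces $\alpha$ directly from the standard presentation of $H^*(N_n)$ (as $\alpha=a_1$) rather than via Wu's formula and Poincar\'e duality, but that is a cosmetic difference.

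Where you genuinely diverge is the orientable case: the paper simply cites~\cite{morfismos} for $\Sigma_g$ with $g\ge 2$, whereas you supply a direct, self-contained calculation. Your product $P_s=\bar a_{2,2}\,\bar b_{s,2}\,\prod_{i=2}^{s}\bar a_{i,1}\,\bar b_{i,1}$ of $2s$ zero-divisors does equal the top class $\pi_1^*c\cdots\pi_s^*c$: in the expansion, each coordinate must receive exactly two degree-one factors multiplying to a nonzero element of $H^2$, i.e.\ a pair of the form $(a_p,b_p)$; since $a_2$ and $b_2$ each appear once and the only coordinate available to both $\bar a_{2,2}$ and $\bar b_{s,2}$ is $1$ (using $s\ge3$), that pair is forced to coordinate~$1$, saturating it and forcing $\bar a_{i,1},\bar b_{i,1}$ to coordinate~$i$ for each $i\ge 2$. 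That leaves a single surviving monomial, so the coefficient is~$1$ in $\mathbb{F}_2$. This gives a lower bound $\zcl_{s,\mathbb{Z}_2}(\Sigma_g)\ge 2s$ matching the dimension upper bound, so the argument is complete. The net effect is that your write-up makes Proposition~\ref{teosurbis} self-contained rather than dependent on an external reference for half the statement, which is a modest but real improvement in exposition.
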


The 2-sphere $S^2$ and the torus $T$ are true exceptional cases, as they have $\TC_s(S^2)=s$ and $\TC_s(T)=2s-2$ for any $s\geq2$ (see~\cite{MR3331610}).

\begin{proof}
Proof details for the oriented case are freely accessible from~\cite{morfismos}, and we only consider here the non-oriented case. For a positive integer $n$, let $N_n=\displaystyle{\sharp_{i=1}^n \mathbb{R}\mathrm{P}^2}$ denote the closed non-orientable surface of genus $n$. Recall that the mod 2 cohomology ring $H^*(N_n)$ is generated by $1$-dimensional classes $a_1,\ldots,a_n$ subject only to the relations $a_i^2=a_j^2$ and $a_ia_j=0$ for all $1\leq i,j\leq n$ with $i\neq j$, together with the relation $a_1^3=0$ when $n=1$ (the latter relation is obviously superfluous if $n>1$). In particular, there is a ring monomorphism $\varphi\colon H^*(N_1)\hookrightarrow H^*(N_n)$ determined by $\varphi(a_1)=a_1$. The corresponding ring monomorphism $\varphi_s:=\varphi^{\otimes s}\colon H^*(N_1)^{\otimes s}\hookrightarrow H^*(N_n)^{\otimes s}$ satisfies $\varphi_s(z_{i,1})=c_{i,1}$ for $2\leq i\leq s$, where the classes $z_{i,1}$ are defined in the paragraph following  Remark~\ref{aclaracion} (note that $N_1=\mathbb{R}\mathrm{P}^2=F(1,2)$), and the $s$-th zero-divisors $c_{i,1}$ are given by
$$
c_{i,1}=\underbrace{a_1 \otimes 1 \otimes \cdots \otimes 1}_{s\,\, \text{factors}} +\underbrace{1 \otimes \cdots \otimes 1 \otimes \overset{i}{a_1} \otimes 1 \otimes \cdots \otimes 1}_{s\,\, \text{factors}} \in H^*(N_n)^{\otimes s}.
$$
Here an $i$ on top of a tensor factor indicates the coordinate where the factor appears. The equality $\TC_s(N_n)=2s$ (for $s\geq 3$) now follow from Proposition~\ref{cotasseq} since  
$$
c_{2,1}^3c_{3,1}^3\prod_{i=4}^{s} (c_{i,1})^2\neq 0
$$
in view of Proposition~\ref{prodsnocerok12}(\ref{paraadelante}).
\end{proof}

%\bibliographystyle{plain}
%\bibliography{TCFlags}

\end{document}